\newtheorem{thm}{Theorem}[section]
\newtheorem{lem}[thm]{Lemma}
\newtheorem{prop}[thm]{Proposition}
\begin{document}
\baselineskip=17pt

\title{Orbits in symmetric spaces, II}

\author{N. J. Kalton}
\address{Department of Mathematics, University of Mis\-souri-Columbia, Colum\-bia, Missouri 65211, U.S.A. }
\email{kaltonn@missouri.edu}

\author{F. A. Sukochev}
\address{School of Mathematics and Statistics, University of New South Wales, Sydney, NSW 2052, Australia }
\email{f.sukochev@unsw.edu.au}

\author{D. Zanin}

\address{School of Computer Science, Engineering and Mathematics, Flinders University, Adelaide, SA 5042, Australia}
\email{zani0005@csem.flinders.edu.au}

\keywords{Fully symmetric spaces, Hardy-Littlewood
majorization, orbits}

\subjclass{46E30, 46B70, 46B20}

\thanks{The first author acknowledges support from NSF grant DMS-0555670; the second and third authors acknowledge support from the ARC}

\begin{abstract}  Suppose $E$ is fully symmetric Banach function space on $(0,1)$ or $(0,\infty)$ or a fully symmetric Banach sequence space.  We give necessary and sufficient conditions on $f\in E$ so that its orbit $\Omega(f)$ is the closed convex hull of its extreme points. We also give an application to symmetrically normed ideals of compact operators on a Hilbert space.
\end{abstract}

\maketitle

\section{Introduction}
Let $I$ be either the interval $(0,1)$ or the semi-axis $(0,\infty)$ and suppose $f\in L_1(I)+L_\infty(I).$  We define the {\it orbit} $\Omega(f)$ of $f$ to be set of $Tf$ where $T:L_1+L_\infty\to L_1+L_\infty$ is an operator with $\|T\|_{L_1\to L_1},\|T\|_{L_\infty\to L_\infty}\le 1$ (see \cites{Ryff1965, KreinPetuninSemenov1982}).  Then it follows from the Calder\'on-Mitjagin Theorem \cites{Calderon1966,Mitjagin1965, BennettSharpley1982, KreinPetuninSemenov1982} that $\Omega(f)$ can be characterized as the set of $g\in L_1+L_\infty$ such that
\begin{equation}\label{HLP}
\int_0^t g^*(s)\,ds\le \int_0^t f^*(s)\,ds, \qquad 0<t<\infty
\end{equation}
where as usual $f^*$ is the decreasing rearrangement of $|f|$ (see \S \ref{prelim} for definitions).  This may be written $g\preceq f$ where $\preceq$ is the Hardy-Littlewood-Polya ordering.
Thus $E$ is an exact interpolation space if and only if it is fully symmetric (see \S \ref{prelim}).

The extreme points of $\Omega(f)$, which we denote $\partial_e\Omega(f)$ were obtained in \cite{Ryff1967} (for case of spaces on $(0,1)$) and \cite{ChilinKryginSukochev1992} (for the general case).  Except in the special case when $I=(0,\infty)$ and $E\supset L_\infty$ these are given by $\partial_e\Omega(f)=\{g:\ g^*=f^*\}$ (see \S \ref{prelim} for full details; in the exceptional cases the extreme points from a subset of this set).
Let $\mathcal Q(f)$ be the convex hull of the set $\{g:\ g^*\le f^*\}.$  Then it is clear that if $E$ is fully symmetric and $f\in E$ the closure $\mathcal Q_E(f)$ of $\mathcal Q(f)$ in $E$ coincides with the closed convex hull of $\partial_e\Omega(f).$

In \cite{Ryff1965} it was shown for the case of $I=(0,1)$ that the orbit $\Omega(f)$ is always weakly compact in $L_1(0,1).$  It follows from results in \cite{DoddsSukochevSchluchtermann2001} that if $E$ is an order-continuous (equivalently, separable) symmetric function space (which is necessarily fully symmetric) and $f\in E$ then $\Omega(f)$ is weakly compact in $E$.  Thus it is an immediate consequence of the Krein-Milman theorem that  $\mathcal Q_E(f)$ coincides with $\Omega(f).$

For the case of non-separable fully symmetric spaces the situation is less clear.  The example $E=L_\infty$ and $f=1$ shows that $\mathcal Q_E(f)$ may still coincide with $\Omega(f).$  This problem was first investigated by Braverman and Mekler \cite{BravermanMekler1977} for the unit interval i.e. $I=(0,1)$.  They gave a sufficient condition for $\Omega(f)=\mathcal Q_E(f)$ in terms of the behavior of the dilation operators $\sigma_\tau$ (see \S \ref{prelim} for the appropriate definitions).  Precisely they showed that if $E$ is a fully symmetric Banach function space on $(0,1)$ such that
$$\lim_{\tau\to\infty}\frac{\|\sigma_\tau\|_{E\to E}}{\tau}=0$$
then $\Omega(f)=\mathcal Q_E(f)$ for every $f\in E.$  This condition is, however, not necessary since it may fail in separable symmetric spaces (e.g. $E=L_1$).

Recently two of the current authors \cite{SukochevZanin2009} found a necessary and sufficient condition for the similar problem concerning the positive part of the orbit.  If $f\ge 0$ we denote by $\Omega_+(f)$ the set $\{g:\ g\in\Omega(f),\ g\ge 0\}.$  In \cite{SukochevZanin2009} it was shown that for a fully symmetric Banach function space $E$ with a Fatou norm (sometimes called a weak Fatou property) that if $f\in E_+$ then $\Omega_+(f)$ coincides with the closed convex hull of its extreme points if and only if a local Braverman-Mekler type condition holds.  If $I=(0,1)$ or if $I=(0,\infty)$ and $E$ is not contained in $L_1(0,\infty)$ this condition takes the form
\begin{equation}\label{SZ1}
\lim_{\tau\to\infty} \frac{\|\sigma_\tau(f^*)\|_E}{\tau}=0.
\end{equation}
If $I=(0,\infty)$ and $E\subset L_1$ we must replace \eqref{SZ1} by
\begin{equation}\label{SZ2}
\lim_{\tau\to\infty} \frac{\|\chi_{(0,1)}\sigma_\tau(f^*)\|_E}{\tau}=0.
\end{equation}
The results of \cite{SukochevZanin2009} imply that under the same hypotheses on $E$ (full symmetricity and a Fatou norm) that \eqref{SZ1} and \eqref{SZ2} are sufficient for $\mathcal Q_E(f)=\Omega(f).$

Our main result in this paper is to show that, indeed, if $E$ is a fully symmetric Banach space with a Fatou norm on $(0,1)$ or $(0,\infty)$, \eqref{SZ1} and \eqref{SZ2} are necessary and sufficient for $\Omega(f)=\mathcal Q_E(f).$  These results are Theorems \ref{Main3}, \ref{Main4} and \ref{Main30} below.  We also establish the corresponding result for sequence spaces in Theorem \ref{Main5}; sequence spaces were not covered in \cite{SukochevZanin2009} so we are also able to complete the picture for the positive part of the orbit.

We conclude the paper with an application to orbits in symmetrically normed ideals of compact operators on a Hilbert space.

\section{Preliminaries}\label{prelim}

In this section we present some definitions from the theory of symmetric spaces. For more details on the latter theory we refer to \cites{KreinPetuninSemenov1982, LindenstraussTzafriri1979, BennettSharpley1982}.

Let $I$ denote either $(0,1)$ or on $(0,\infty)$ with Lebesgue measure $\mu$. If $f\in L_1(I)+L_\infty(I)$ we denote by $f^*$ the decreasing rearrangement of $f$, i.e.
$$ f^*(t)=\inf_{\mu A=t}\sup_{s\in I\setminus A}|f(s)|.$$
If $f,g$ are functions in $L_1+L_\infty$ we write
$g\preceq f$ if
$$ \int_0^t g^*(s)\,ds \le \int_0^t f^*(s)\,ds, \qquad t\in I.$$
This defines the Hardy-Littlewood-Polya ordering.

 A {\it symmetric Banach function space} $E$ on $I$ is a linear space with $L_1\cap L_\infty\subset E\subset L_1+L_\infty$, with an associated norm $\|\cdot\|_E$ such that $(E,\|\cdot\|_E)$ is complete and
 if $f\in E,\ g\in L_1+L_\infty$ with $g^*\le f^*$ then $g\in E$ and $\|g\|_E\le \|f\|_E.$   We will use $E_+$ to denote the positive cone of $E$ i.e. $\{f:\ f\in E,\ f\ge 0 \text{ a.e.}\}.$ We will also assume the normalization that $\|\chi_{(0,1)}\|=1.$  Let $\varphi_E(t)=\|\chi_{(0,t)}\|_E$ be the {\it fundamental function} of $E.$

$E$ is said to have a {\it Fatou norm} if for every sequence $(f_n)_{n=1}^{\infty}$ of nonnegative functions such that $f_n\uparrow f$ a.e. with $f\in E$ we have $\lim_{n\to\infty}\|f_n\|_E=\|f\|_E.$

A symmetric Banach function space $E$ is said to be {\it fully symmetric} if and only if $f\in E,\ g\in L_1+L_{\infty}$ with $g\preceq f$, then $g\in E$ and $||f||_E\leq ||g||_E.$   $E$ is fully symmetric precisely when $E$ is an exact interpolation space for the couple $(L_\infty(I),L_1(I))$ by the Calder\'on-Mitjagin theorem \cites{Mitjagin1965, Calderon1966}.  In this paper we will only consider fully symmetric Banach function spaces.

We will need the following inequality can be found in \cite{KreinPetuninSemenov1982}, Theorem II.3.1. If $f,g\in L_1+L_{\infty},$ then
\begin{equation}\label{second}
(f^*-g^*)\preceq (f-g)^*.
\end{equation}
As a consequence if $E$ is fully symmetric and $f,g\in E$ we have
\begin{equation}\label{KPS}
\|f^*-g^*\|_E\le \|f-g\|_E.
\end{equation}

If $E$ is a fully symmetric Banach function space and $f\in E$ we define the orbit of $f$ by $\Omega(f)=\{g:\ g^*\preceq f^*\}\subset E.$
The set of the extreme points of the set $\Omega(f)$ is well-known (see \cites{Ryff1967,ChilinKryginSukochev1992}) and, if $I=(0,1)$ or $I=(0,\infty)$, and $E$ does not contain  $L_\infty$ it is given by
$$\partial_e(\Omega(f))=\{g\in L_1+L_{\infty}:\ f^*=g^*\}.$$  If $I=(0,\infty)$ and $E$ contains $L_\infty$ we must make a small correction:
$$\partial_e(\Omega(f))=\{g\in L_1+L_{\infty}:\ f^*=g^*,\ |g(t)|\ge \lim_{s\to\infty}f^*(s) \text{ a.e. }\}.$$
We define $\mathcal Q(f)$ to be the convex hull of the set
$\{g\in L_1+L_{\infty}:\ g^*\le f^*\}.$ We will denote by
$\mathcal Q_E(f)$ the closure in $E$ of $\mathcal Q(f).$  This
is easily seen to coincide with the closed convex hull of
$\partial_e\Omega(f).$ Thus $\mathcal Q_E(f)\subset \Omega(f).$

We next define the dilation operators on $E$.
If $\tau>0$ and $I=(0,\infty)$  the dilation operator $\sigma_{\tau}$ is defined by setting $$(\sigma_{\tau}(f))(s)=f({s}/{\tau}),\qquad s>0.$$  In the case of the interval $(0,1)$ the operator $\sigma_{\tau}$ is defined by
$$
(\sigma_{\tau}f)(s)=
\begin{cases}
f(s/\tau),& s\leq\min\{1,\tau\}\\
0,& \tau<s\leq1.
\end{cases}
$$
The operators $\sigma_{\tau}$ ($\tau\geq1$) satisfy semi-group property $\sigma_{\tau_1}\sigma_{\tau_2}=\sigma_{\tau_1\tau_2}.$ If $E$ is a symmetric space and if $\tau>0,$ then the dilation operator $\sigma_{\tau}$ is a bounded operator on $E$ and
$$||\sigma_{\tau}||_{E\to E}\leq\max\{1,\tau\}.$$

  If $E$ is a fully symmetric function space on $(0,\infty)$ then $E+L_\infty$ is also a fully symmetric function space under the norm
$$ \|f\|_{E+L_\infty}=\|f^*\chi_{(0,1)}\|_E.$$  The next Lemma will be used later.

\begin{lem} \label{L1} Let $E$ be a symmetric function space on $(0,\infty)$, such that $E\setminus L_1\neq \emptyset,$ and suppose $f\in L_1\cap E.$  Then
$$ \lim_{\tau\to\infty}\tau^{-1}\|\sigma_\tau(f)\|_E=\lim_{\tau\to\infty}\tau^{-1}\|\sigma_\tau(f)\|_{E+L_\infty}.$$
\end{lem}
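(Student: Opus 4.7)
The plan is to reduce the claim to a single decay estimate and prove it in two steps, one using $f\in L_1$ and one using $E\not\subset L_1$. For existence of the limits: since $\sigma_{\tau_1\tau_2}=\sigma_{\tau_1}\sigma_{\tau_2}$ and $\|\sigma_\tau\|\le\tau$ on both $E$ and $E+L_\infty$ for $\tau\ge 1$, the maps $\tau\mapsto\tau^{-1}\|\sigma_\tau(f)\|_E$ and $\tau\mapsto\tau^{-1}\|\sigma_\tau(f)\|_{E+L_\infty}$ are decreasing on $[1,\infty)$. Since both quantities are rearrangement invariant I may assume $f=f^*$, so $\sigma_\tau(f)$ is itself decreasing and $\|\sigma_\tau(f)\|_{E+L_\infty}=\|\sigma_\tau(f)\chi_{(0,1)}\|_E$. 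Splitting $\sigma_\tau(f)=\sigma_\tau(f)\chi_{(0,1)}+\sigma_\tau(f)\chi_{(1,\infty)}$ and applying the triangle inequality gives
$$
0\le \|\sigma_\tau(f)\|_E-\|\sigma_\tau(f)\|_{E+L_\infty}\le \|\sigma_\tau(f)\chi_{(1,\infty)}\|_E,
$$
so it suffices to prove $\tau^{-1}\|\sigma_\tau(f)\chi_{(1,\infty)}\|_E\to 0$.

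A direct computation identifies the decreasing rearrangement of $\sigma_\tau(f)\chi_{(1,\infty)}$ as $h_\tau(t)=f\bigl((t+1)/\tau\bigr)$. Combining the pointwise bound $h_\tau\le f(1/\tau)$ with $\int_0^\infty h_\tau=\tau\int_{1/\tau}^\infty f\le\tau\|f\|_{L_1}$ yields
$$
\int_0^t h_\tau(s)\,ds\le\min\bigl(tf(1/\tau),\ \tau\|f\|_{L_1}\bigr),\qquad t>0,
$$
which is exactly $h_\tau\preceq f(1/\tau)\,\chi_{(0,A(\tau))}$ with $A(\tau):=\tau\|f\|_{L_1}/f(1/\tau)$. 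Full symmetry of $E$ then gives
$$
\tau^{-1}\|\sigma_\tau(f)\chi_{(1,\infty)}\|_E\le\frac{f(1/\tau)}{\tau}\,\varphi_E\bigl(A(\tau)\bigr)=\|f\|_{L_1}\,\frac{\varphi_E(A(\tau))}{A(\tau)}.
$$
Because $f$ is a decreasing $L_1$ function, $uf(u)\le\int_0^u f\to 0$ as $u\to 0^+$, so $f(1/\tau)/\tau\to 0$ and $A(\tau)\to\infty$.

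It remains to show that $\varphi_E(M)/M\to 0$ as $M\to\infty$; this is where the hypothesis $E\setminus L_1\ne\emptyset$ enters. I would pick $h=h^*\in E$ with $\int_0^\infty h=\infty$ and set $H(M)=\int_0^M h$, so that $H(M)\to\infty$ and $H$ is concave with $H(0)=0$. Because $H(t)/t$ is then decreasing in $t$, a routine check in the two ranges $t\le M$ and $t>M$ gives $\min(t,M)\le(M/H(M))H(t)$ for all $t>0$, which is the statement $\chi_{(0,M)}\preceq(M/H(M))\,h$. Full symmetry yields $\varphi_E(M)\le(M/H(M))\|h\|_E$, and hence $\varphi_E(M)/M\le\|h\|_E/H(M)\to 0$. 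Combined with the previous displayed estimate and $A(\tau)\to\infty$, this finishes the proof.

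The main obstacle to anticipate is picking the right intermediate invariant: the vanishing of $\varphi_E(M)/M$ is the precise analytic content of $E\not\subset L_1$. A naive attempt to bound $h_\tau/\tau$ pointwise and invoke dominated convergence fails because $E$ need not be order continuous, and Hardy-Littlewood-Polya majorization against a witness $h\in E\setminus L_1$ is what replaces it.
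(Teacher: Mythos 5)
Your proof is correct, and the key step is handled by a genuinely different mechanism than the paper's. Both arguments make the same initial reduction: assume $f=f^*$, note $\|\sigma_\tau f\|_{E+L_\infty}=\|(\sigma_\tau f)\chi_{(0,1)}\|_E$, and reduce everything to showing $\tau^{-1}\|(\sigma_\tau f)\chi_{(1,\infty)}\|_E\to 0$. The paper then invokes the Lorentz-space embedding of Theorem II.5.5 of Kre\u{\i}n--Petun\={\i}n--Sem\"enov, bounding the norm of the decreasing function $f(\tau^{-1})\chi_{(0,1)}+(\sigma_\tau f)\chi_{(1,\infty)}$ by $\int_0^\infty\psi'(s)(\cdot)\,ds$ with $\psi$ the least concave majorant of $\varphi_E$, and finishes with dominated convergence using $\psi'(t)\to0$ (the analytic form of $E\setminus L_1\neq\emptyset$) and $f\in L_1$. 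You instead majorize the tail in the Hardy--Littlewood--P\'olya sense by $f(1/\tau)\chi_{(0,A(\tau))}$ and reduce to $\varphi_E(M)/M\to0$, which you derive by majorizing $\chi_{(0,M)}$ against a witness $h\in E\setminus L_1$; your verification of both majorizations and of $A(\tau)\to\infty$ is sound (the degenerate case $f(1/\tau)=0$ just makes the tail vanish identically). What your route buys is self-containedness -- no appeal to the Lorentz/Marcinkiewicz embedding theorem -- and it isolates the clean invariant $\varphi_E(M)/M\to0$ as the exact content of $E\not\subset L_1$. The one thing it costs is that your two majorization steps require \emph{full} symmetry of $E$ (the lemma is stated for symmetric $E$, and the paper's embedding argument works at that level of generality), but since the paper's standing convention is that all spaces considered are fully symmetric, and the lemma is only ever applied to such spaces, this is immaterial here; it would only be worth a remark if you wanted the lemma verbatim for merely symmetric spaces.
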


\begin{proof} We may suppose $f$ is nonnegative and decreasing. Let $\varphi=\varphi_E$ be the fundamental function of $E$ and let $\psi$ be its least concave majorant of $\varphi.$ Since $E\setminus L_1\neq\emptyset$ we have $\lim_{t\to\infty}\psi'(t)=0.$ For any $\tau>1$ we have, using Theorem II.5.5 of \cite{KreinPetuninSemenov1982}
\begin{align*} \|(\sigma_{\tau}f)\chi_{(1,\infty)}\|_E &\le \|f(\tau^{-1})\chi_{(0,1)}+(\sigma_\tau f)\chi_{(1,\infty)}\|_E\\
&\le f(\tau^{-1})\int_0^1\psi'(s)\,ds +\int_1^\infty \psi'(s) f(\tau^{-1}s)\,ds\\
& \le \psi(1)f(\tau^{-1})+ \tau\int_{\tau^{-1}}^{\infty}\psi'(\tau s)f(s)\,ds.\end{align*}

Now we have, since $f\in L_1,$
$$ \lim_{\tau\to\infty}\tau^{-1}f(\tau^{-1})=0$$ and by the Dominated Convergence Theorem,
$$ \lim_{\tau\to\infty}\int_{\tau^{-1}}^{\infty}\psi'(\tau s)f(s)\,ds=\lim_{\tau\to\infty}\int_{0}^{\infty}\chi_{(\tau^{-1},\infty)}(s)\psi'(\tau s)f(s)\,ds=0.$$  Hence
$$ \lim_{\tau\to\infty}\tau^{-1}\|(\sigma_{\tau}f)\chi_{(1,\infty)}\|_E=0$$ and the Lemma follows.
\end{proof}

We next discuss the corresponding notions for sequence spaces.  If $\xi=(\xi_n)_{n=1}^{\infty}$ is a sequence then $\xi^*$ denotes its decreasing rearrangement:
$$\xi^*_n=\inf_{|\mathbb A|=n-1}\sup_{k\in\mathbb N\setminus \mathbb A}|\xi_k|.$$
A Banach sequence space $E$ is called {\it symmetric} if $\xi\in E$ and $\eta^*\le \xi^*$ implies that $\eta\in E$ and $\|\eta\|_E\le \|\xi\|_E.$
We write $\eta\preceq \xi$ if
$$ \sum_{k=1}^n\eta_k^*\le \sum_{k=1}^n\xi_k^*, \qquad n\in\mathbb N.$$  $E$ is called {\it fully symmetric} if $\xi\in E$ and $\eta\preceq \xi$ implies that $\eta\in E$ and $\|\eta\|_E\le \|\xi\|_E.$
If $\xi$ is any bounded sequence we define its orbit $\Omega(\xi)=\{\eta:\, \eta\preceq \xi\}.$

In this context, we define the dilation operators $\sigma_m$ only for $m\in\mathbb N.$  Then
$$ \sigma_m(\xi)=(\xi_1,\ldots,\xi_1,\xi_2,\ldots,\xi_2,\xi_3\ldots)$$ where each $\xi_j$ is repeated $m$ times.

\section{Approximation of the orbit}

Our first proposition gives a simple criterion which will enable us to check when $\mathcal Q_E(f)=\Omega(f).$

\begin{prop}\label{convexapprox} Let $E$ be a fully symmetric Banach space on $(0,\infty).$  Suppose $f,g$ are  nonnegative decreasing functions in $E$.  Then $g\in \mathcal Q_E(f)$ if and only if, given $\epsilon>0,$ there exists a nonnegative decreasing function $h\in E$ and an integer $p$ such that $0\le h\le g$  and
\begin{equation}\label{20}
\|g-h\|_E<\epsilon
\end{equation}
and
\begin{equation}\label{21}
\int_{pa}^b h(t)\,dt\le \int_a^b f(t)\,dt, \qquad 0<pa<b<\infty.
\end{equation}
\end{prop}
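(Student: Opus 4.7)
I would prove the biconditional by handling the two directions separately, with different constructions; both pass through rearrangement theory.

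For the direct direction $(\Rightarrow)$, starting from $g \in \mathcal{Q}_E(f)$ and $\epsilon > 0$, I plan to select $g_\epsilon = \sum_{i=1}^n c_i g_i \in \mathcal{Q}(f)$ with $g_i^* \le f^*$, $c_i \ge 0$, $\sum c_i = 1$, and $\|g - g_\epsilon\|_E < \epsilon$. Set $p := n$. Using the standard subadditivity of decreasing rearrangement, $(g_\epsilon)^*(nt) \le \sum_i c_i g_i^*(t) \le f^*(t)$, so $G := g_\epsilon^*$ will satisfy $G(s) \le f^*(s/n)$, and by \eqref{KPS} we will have $\|g - G\|_E \le \|g - g_\epsilon\|_E < \epsilon$. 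I then define $h := \min(g, G)$, which is nonnegative decreasing with $h \le g$ and $\|g - h\|_E < \epsilon$. The integral condition \eqref{21} reduces, via $h \le G$, to establishing the sharp rearrangement inequality $\int_{na}^b g_\epsilon^* \le \int_a^b f^*$; for this I would use the layer-cake identity $\int_{na}^b g_\epsilon^* = \int_0^\infty \max(0, \min(b, \mu_{g_\epsilon}(\lambda)) - na)\, d\lambda$ combined with the bound $\mu_{g_\epsilon}(\lambda) \le n \mu_f(\lambda)$ coming from the convex-combination structure.

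For the converse direction $(\Leftarrow)$, since $\mathcal{Q}_E(f)$ is closed in $E$ and $\|g - h\|_E < \epsilon$ can be taken arbitrarily small, it suffices to show $h \in \mathcal{Q}(f)$. My plan is to construct nonnegative measurable functions $k_0, \ldots, k_{p-1}$ on $(0,\infty)$ with $k_j^* \le f^*$ and $h = \frac{1}{p}\sum_{j=0}^{p-1} k_j$, which would immediately place $h$ in $\mathcal{Q}(f)$. The integral condition \eqref{21} encodes precisely the mass-balance needed for such a decomposition: it says the $ph$-mass in any stretched window $(pa, b)$ is dominated by the $f$-mass in $(a, b)$, i.e., the capacity available when placing $p h$ into $p$ slots each profile-bounded by $f^*$. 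Concretely I would build the $k_j$ by an explicit push-forward using the pseudo-inverse of $F(s) := \int_0^s f$ to distribute the hypograph of $p h$ into the required $p$ pieces.

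The main technical hurdle, in either direction, is translating between the integral condition and the rearrangement-theoretic structure. On the direct side, the sharp inequality $\int_{na}^b g_\epsilon^* \le \int_a^b f^*$ is strictly stronger than what the integrated pointwise bound $n \int_a^{b/n} f^*$ delivers (the latter can exceed $\int_a^b f^*$), and so requires a careful distribution-function argument rather than the crude sub-additivity estimate alone. On the converse side, the explicit decomposition of $ph$ into $p$ functions each rearrangement-dominated by $f^*$ is a genuine measure-theoretic construction, for which \eqref{21} is exactly the compatibility check that makes it feasible.
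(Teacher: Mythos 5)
Your overall skeleton coincides with the paper's: in the forward direction you approximate $g$ by a convex combination $g_\epsilon$, set $h=g\wedge g_\epsilon^*$, and control $\|g-h\|_E$ via \eqref{KPS} --- that part is fine. But the two substantive steps, which the paper does not prove from scratch but imports as Lemma 4.1 and Theorem 6.3 of \cite{KaltonSukochev2008}, are exactly where your sketch breaks down. In the forward direction, the inequality $\int_{na}^b g_\epsilon^*\,dt\le\int_a^b f\,dt$ is true, but it cannot be extracted from the layer-cake identity together with $\mu_{g_\epsilon}(\lambda)\le n\,\mu_f(\lambda)$: that distribution-function bound is strictly weaker than the hypothesis. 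Level by level the needed comparison $\bigl(\min(b,m)-na\bigr)^+\le\bigl(\min(b,M)-a\bigr)^+$ with $m\le nM$ fails (take $n=2$, $M=1$, $m=2$, $a=1/10$, $b=3$: it reads $1.8\le 0.9$), and no aggregation over $\lambda$ can repair this, because the bound $\mu_v\le n\mu_f$ is also satisfied by $v=\chi_{(0,n)}$ with $f=\chi_{(0,1)}$, for which $\int_{na}^{n}v\,dt=n(1-a)>1-a=\int_a^{n}f\,dt$. So any argument using only $\mu_{g_\epsilon}\le n\mu_f$ proves a false statement; one must use the actual convex-combination structure of $g_\epsilon$, which is precisely the content of Lemma 4.1 of \cite{KaltonSukochev2008}.

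In the converse direction your target is too strong. You propose to show $h\in\mathcal Q(f)$ exactly, via an equal-weight decomposition $h=\frac1p\sum_{j=0}^{p-1}k_j$ with $k_j^*\le f^*$; no such decomposition exists in general. Take $f=\chi_{(0,1)}$ and $h=2^{-1/2}\chi_{(0,\sqrt2)}$: one checks that \eqref{21} holds with $p=2$, yet if $h=\frac1p\sum_j k_j$ with $k_j^*\le\chi_{(0,1)}$ then $1=\int h\le\frac1p\sum_j\int|k_j|\le1$ forces each $k_j$ to be the indicator of a set of measure one, whence $\sum_j\chi_{A_j}=p/\sqrt2$ a.e.\ on $(0,\sqrt2)$, impossible since the left-hand side is integer-valued. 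This is exactly why the statement involves the closure $\mathcal Q_E(f)$ and why the relevant result (Theorem 6.3 of \cite{KaltonSukochev2008}) yields only $h\in\alpha\mathcal Q(f)$ for every $\alpha>1$ --- which, combined with $\|h-\alpha^{-1}h\|_E\to0$, is all that is needed. You should aim for that weaker conclusion; but even the $\alpha>1$ version is a genuine construction, and ``push-forward by the pseudo-inverse of $F$'' is a gesture toward it rather than an argument.
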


\begin{proof} Suppose first $g\in\mathcal Q_E(f).$  Then given $\epsilon>0$ there exist $f_1,\ldots,f_p\in E$ such that
$f_j^*\le f$ for $1\le j\le p$ and
$$ \|g-\frac1p(f_1+\cdots+f_p)\|_E<\epsilon.$$
Let
$$ u=\frac1p(f_1+\cdots+f_p),\quad v=\frac1p(|f_1|+\cdots+|f_p|).$$
Then if $h=g\wedge v^*,$ using \eqref{KPS}
$$ \|g- h\|_E\le \|g-g\wedge u^*\|_E\le \|g-u^*\|_E\le \|g-u\|_E<\epsilon.$$
It remains to observe that \eqref{21} holds by Lemma 4.1 of \cite{KaltonSukochev2008}.

The converse  is easy.  If $h$ satisfies \eqref{20} and \eqref{21} then $h\in \alpha \mathcal Q(f)$ for every $\alpha>1$ by Theorem 6.3 of \cite{KaltonSukochev2008}.  Hence $h\in\mathcal Q_E(f)$   and so $d(g,\mathcal Q_E(f))<\epsilon.$
\end{proof}

The next Lemma is surely well-known but we use it in the main result and include a proof for completeness.

\begin{lem} \label{concaveminorant} Let $F$ be a continuous nonnegative increasing concave function on $[0,\infty)$ with $F(0)=0.$ Let us suppose that $(\alpha_n)_{n\in\mathbb Z}$ is an increasing doubly infinite sequence of distinct positive reals with $$\lim_{n\to-\infty}\alpha_n=0, \qquad\lim_{n\to\infty}\alpha_n=\infty.$$  Suppose that $(\beta_n)_{n\in\mathbb Z}$ is any sequence with $$0\le \beta_n\le F(\alpha_n), \qquad n\in \mathbb Z.$$  (i) There is a least concave function $G$ on $
[0,\infty)$ such that $G(0)\ge 0,$ and $G(\alpha_n)\ge \beta_n$ for $n\in\mathbb Z.$   $G$ is continuous nonnegative and increasing and $G(0)=0.$
\newline
(ii) Furthermore if $n\in\mathbb Z$ then either
$$ G(t)=G(\alpha_n)t/\alpha_n, \qquad 0\le t\le \alpha_n$$ or
there exists $m<n$ so that
$$ G(t)=\beta_m + \frac{G(\alpha_n)-\beta_m}{\alpha_n-\alpha_m}(t-\alpha_m), \qquad \alpha_m\le t\le \alpha_n.$$
\end{lem}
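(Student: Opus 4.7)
The plan is to realize $G$ as the pointwise infimum of the family
\[
\mathcal C:=\{H:[0,\infty)\to\mathbb R:\ H\text{ concave},\ H(0)\ge 0,\ H(\alpha_n)\ge\beta_n\ \forall n\in\mathbb Z\},
\]
and to read off every property from minimality.  This family is nonempty because $F\in\mathcal C$; a pointwise infimum of concave functions is concave (using $\inf_H[\lambda H(s)+(1-\lambda)H(u)]\ge\lambda\inf_H H(s)+(1-\lambda)\inf_H H(u)$), and $G$ inherits both constraints from each member, so $G\in\mathcal C$ is its minimum.  The assumption $F(0)=0$ then forces $G(0)\le 0$, and combined with $G(0)\ge 0$ gives $G(0)=0$; the inequality $G\le F$ on $[0,\infty)$ gives continuity at $0$, while concavity gives continuity on $(0,\infty)$.

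For monotonicity, suppose $G(t_1)>G(t_2)$ for some $0<t_1<t_2$ and set $c=G(t_1)-G(t_2)>0$.  Concavity forces $G'(s^+)\le-c/(t_2-t_1)<0$ for every $s\ge t_2$, so $G(s)\to-\infty$, contradicting $G(\alpha_n)\ge\beta_n\ge 0$ along $\alpha_n\to\infty$.  Hence $G$ is nondecreasing, and then $G\ge G(0)=0$.

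For (ii), the central step I would establish is a \emph{local linearity principle}: $G$ is affine in a neighborhood of every $t_0\in(0,\infty)$ that is not an active constraint (meaning $t_0\ne\alpha_k$ for any $k$, or $t_0=\alpha_k$ but $G(\alpha_k)>\beta_k$).  Arguing by contradiction, suppose $G$ fails to be affine on $[t_0-\delta,t_0+\delta]$ for every sufficiently small $\delta$.  Using local finiteness of $(\alpha_n)$ in $(0,\infty)$, shrink $\delta$ so that $(t_0-\delta,t_0+\delta)$ contains no $\alpha_k$ other than possibly $t_0$ itself.  Replace $G$ on $[t_0-\delta,t_0+\delta]$ by the chord joining its endpoint values to obtain $\tilde G$.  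Concavity of $\tilde G$ at the two splice points is automatic, since by concavity of $G$ the chord slope is sandwiched between $G'((t_0+\delta)^-)$ and $G'((t_0-\delta)^+)$; $\tilde G\le G$ with strict inequality at some interior point because $G$ is not affine there; and at $t_0$ (should $t_0=\alpha_k$) continuity of $G$ together with $G(t_0)>\beta_k$ ensures the chord value at $t_0$ exceeds $\beta_k$ once $\delta$ is small.  Thus $\tilde G\in\mathcal C$ and $\tilde G<G$ somewhere, contradicting the minimality of $G$.

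Consequently $G$ is affine on each maximal subinterval of $(0,\infty)$ disjoint from the set of active constraints.  Fix $n$.  If $\{k<n:G(\alpha_k)=\beta_k\}$ is empty then $G$ is affine on $[0,\alpha_n]$ and $G(0)=0$ forces $G(t)=G(\alpha_n)t/\alpha_n$.  Otherwise let $m$ be its maximum; no active constraint lies in $(\alpha_m,\alpha_n)$, so $G$ is affine on $[\alpha_m,\alpha_n]$ with endpoint values $\beta_m=G(\alpha_m)$ and $G(\alpha_n)$, yielding the second formula.  The principal obstacle is the chord-splicing step, where concavity at the splice points and all intermediate constraints $G(\alpha_k)\ge\beta_k$ must be respected simultaneously; both are managed by shrinking $\delta$ to isolate $t_0$ from the rest of $(\alpha_n)$, using the local finiteness of that sequence in $(0,\infty)$ together with the continuity of $G$.
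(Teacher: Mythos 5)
Your proposal is correct. Part (i) is essentially the paper's argument: $G$ is the pointwise infimum of the admissible family, nonempty because $F$ belongs to it, with continuity at $0$ read off from $G\le F$; the only cosmetic difference is that the paper puts ``increasing'' into the definition of $\mathcal C$, whereas you omit it and instead deduce monotonicity from concavity plus $G(\alpha_n)\ge\beta_n\ge 0$ along $\alpha_n\to\infty$ (a worthwhile addition, since the statement of the lemma asks for the least \emph{concave} function). For part (ii) your route differs in execution: you prove a local structural fact --- $G$ is affine near every point that is not an active constraint, via chord-splicing on a small interval isolated from the other $\alpha_k$ --- and then read off the two alternatives by taking $m$ to be the largest active index below $n$. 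The paper instead makes a single global perturbation: it takes the least $p$ with $G$ affine on $[\alpha_p,\alpha_n]$, replaces $G$ by the chord over $[\alpha_{p-1},\alpha_n]$ to get $G_0$, observes that $(1-\lambda)G+\lambda G_0\notin\mathcal C$ for all $\lambda\in(0,1)$, and extracts an active constraint $G(\alpha_m)=\beta_m$ by a pigeonhole argument as $\lambda\to 0$. Your version requires the extra (correctly handled) verifications that the spliced function is concave at the two splice points and still satisfies a possible constraint at $t_0$ itself, and it uses local finiteness of $(\alpha_n)$ in $(0,\infty)$; in return it yields the cleaner statement that $G$ is affine on each component of the complement of the active set, from which both alternatives follow at once.
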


\begin{proof}  (i) is almost immediate.  $G$ is defined as the infimum of the collection $\mathcal C$ of all increasing concave functions $H$ on $[0,\infty)$ such that $H(\alpha_n)\ge \beta_n$ for all $n\in\mathbb Z$ and $H(0)\ge 0.$  This collection is non-empty since $F\in\mathcal C.$  $G$ is affine on each interval $[\alpha_n,\beta_{n+1}]$ and  since $G\le F$, $G$ is continuous at $0.$

For (ii), assume $G$ is not affine on $[0,\alpha_n]$.  Then there exists a least $p<n$ so that $g$ is affine on $[\alpha_{p},\alpha_n].$  Let $G_0$ be the function equal to $G$ on $[0,\alpha_{p-1}]$ and $[\alpha_n,\infty)$ and affine on $[\alpha_{p-1},\alpha_n].$
Then for any $0<\lambda<1$ we have $(1-\lambda)G+\lambda G_0\notin \mathcal C.$  Hence there exists $k(\lambda)\in\{p,p+1,\ldots,n-1\}$ so that
$$ (1-\lambda)G(\alpha_{k(\lambda)})+\lambda G_0(\alpha_{k(\lambda)})<\beta_{k(\lambda)}.$$
Letting $\lambda\to 0$ through as a suitable sequence where $k(\lambda)=m<n$ is constant we obtain
$ G(\alpha_m)=\beta_m$ and the second alternative holds.\end{proof}

We now prove our main result.

\begin{thm}\label{main1}  Let $E$ be a fully symmetric Banach function space on $(0,\infty)$ with  Fatou norm. Suppose $f\in E_+\setminus L_1$ is such that
$\Omega(f)=\mathcal Q_E(f).$  Then
$$ \lim_{\tau\to\infty}\tau^{-1}\|\sigma_\tau(f^*)\|_E=0.$$
\end{thm}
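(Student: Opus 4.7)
The plan is to prove the contrapositive: assuming the dilation condition fails, I would exhibit a function $g\in\Omega(f)\setminus\mathcal Q_E(f)$, contradicting the hypothesis.  Suppose therefore that $\limsup_{\tau\to\infty}\tau^{-1}\|\sigma_\tau(f^*)\|_E\ge 2c>0$, and select a sequence $\tau_n\nearrow\infty$ with $\tau_n^{-1}\|\sigma_{\tau_n}(f^*)\|_E\ge c$ for every $n$.

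To construct $g$ I invoke Lemma \ref{concaveminorant}.  Writing $F(t)=\int_0^t f^*(s)\,ds$, I would pick a doubly-infinite strictly increasing sequence $(\alpha_n)_{n\in\mathbb Z}$ whose positive-index subsequence contains the points $\tau_n$ and is otherwise widely spaced, together with values $\beta_n\le F(\alpha_n)$ chosen so that $\beta_n=F(\alpha_n)$ exactly at the points $\tau_n$.  The least concave majorant $G$ supplied by Lemma \ref{concaveminorant} satisfies $G(0)=0$, $G(\alpha_n)\ge\beta_n$, and $G\le F$, the last bound holding because $F$ itself is a competitor in the minimization.  Setting $g:=-G'$ (taken right-continuous), $g$ is nonnegative and decreasing and $\int_0^t g(s)\,ds = G(t)\le F(t)$, so $g\preceq f$ and $g\in\Omega(f)$.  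By part~(ii) of Lemma \ref{concaveminorant}, $g$ is piecewise constant on each interval on which $G$ is affine; in particular each $\tau_n$ is the right endpoint of a plateau $[\alpha_{m(n)},\tau_n]$ on which $g$ takes a constant value $v_n\sim F(\tau_n)/\tau_n$, and the spacing of the $\alpha_n$ can be arranged so that $\alpha_{m(n)}\ll\tau_n$ and $\|g\chi_{[\alpha_{m(n)},\tau_n]}\|_E\ge c/2$; this last estimate rests on the hypothesis $\tau_n^{-1}\|\sigma_{\tau_n}(f^*)\|_E\ge c$ together with a comparison of the plateau norm to the dilation norm through the fundamental function $\phi_E$.

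Assume now for contradiction that $g\in\mathcal Q_E(f)$.  For $\epsilon:=c/8$, Proposition \ref{convexapprox} yields an integer $p\in\mathbb N$ and a decreasing $h$ with $0\le h\le g$, $\|g-h\|_E<\epsilon$, and $\int_{pa}^b h(s)\,ds\le\int_a^b f^*(s)\,ds$ for every $0<pa<b$.  Monotonicity of $h$ converts this into the pointwise bound
\[
h(b)\le\frac{F(b)-F(a)}{b-pa}\qquad\text{for all }a\in(0,b/p).
\]
For fixed $p$ and $n$ sufficiently large, choosing $a$ on the order of $\tau_n/(2p)$ makes the right-hand side at $b\le\tau_n$ much smaller than the plateau height $v_n$; this step uses the failure of \eqref{SZ1}, which (combined with $f\notin L_1$) forces the increment $F(\tau_n)-F(\tau_n/(2p))$ to be of strictly smaller order than $F(\tau_n)$ along the sequence $\tau_n$.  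Hence $h\le v_n/2$ on $[\tau_n/2,\tau_n]$ for large $n$, so $(g-h)\ge v_n/2$ there, and
\[
\|g-h\|_E\ge\tfrac12\|g\chi_{[\tau_n/2,\tau_n]}\|_E\ge c/4,
\]
contradicting $\|g-h\|_E<c/8$.

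The principal obstacle is the interlocking quantitative calibration built into the construction: the plateaus of $g$ must carry nontrivial $E$-norm (of order $c$) and simultaneously be long enough in the scale $\tau_n$ that, for any integer $p$ chosen \emph{after the fact} in Proposition \ref{convexapprox}, the pointwise bound on $h$ lies strictly below the plateau height on a subinterval of substantial $E$-measure.  This interplay is precisely what Lemma \ref{concaveminorant} is designed to control, since its part~(ii) pins down which intervals $[\alpha_m,\alpha_n]$ support an affine piece of $G$ and hence a plateau of $g$; the required translation between $v_n\,\phi_E(\tau_n)$ and $\tau_n^{-1}\|\sigma_{\tau_n}(f^*)\|_E$ is the main norm-theoretic input.
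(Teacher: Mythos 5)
Your overall architecture (take a function in the orbit with long flat plateaus, then show that the pair $(h,p)$ produced by Proposition \ref{convexapprox} cannot coexist with those plateaus) is indeed the paper's mechanism --- it is exactly how Lemma \ref{crucial} is proved there. But your contrapositive shortcut has a genuine gap at its quantitative core: the claimed bound $\|g\chi_{[\alpha_{m(n)},\tau_n]}\|_E\ge c/2$ does not follow from $\tau_n^{-1}\|\sigma_{\tau_n}(f^*)\|_E\ge c$. Your plateau is essentially the flat block $\frac{F(\tau_n)}{\tau_n}\chi_{(0,\tau_n)}$, whose norm is $\frac{F(\tau_n)}{\tau_n}\varphi_E(\tau_n)$; since this block is HLP-dominated by $f^*$ itself, one has $\frac{F(\tau_n)}{\tau_n}\varphi_E(\tau_n)\le\|f\|_E$ and, more to the point, $\frac{F(t)}{\tau t}\varphi_E(\tau t)\le\tau^{-1}\|\sigma_\tau f^*\|_E$ for every $t$ --- i.e.\ the single-scale quantity sits \emph{below} the dilation norm, with no reverse inequality in a general fully symmetric $E$. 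A single plateau cannot capture $\tau^{-1}\|\sigma_\tau f^*\|_E$; this is precisely why the paper builds the multi-scale functions $\psi_\kappa$ that interpolate $F$ at \emph{all} the scales $a_n$ simultaneously, and why its final step is the functional inequality \eqref{311} together with the absorption argument $L\le\frac45L+\frac54\|\psi_p\|_E$, rather than any comparison of the dilation norm with a norm at one scale.

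A second, independent failure occurs in your contradiction step. From $\int_{pa}^bh\le F(b)-F(a)$ you correctly get $h(b)\le\frac{F(b)-F(a)}{b-pa}$, but making this $\le v_n/2\approx\frac{F(\tau_n)}{2\tau_n}$ with $a\sim\tau_n/(2p)$, $b\sim\tau_n$ forces $F(\tau_n)-F(\tau_n/(2p))\lesssim\frac14F(\tau_n)$. Nothing of the sort follows from the failure of \eqref{SZ1}: for $f(t)=t^{-1/2}\notin L_1(0,\infty)$ one has $F(\tau)-F(\tau/(2p))=(1-(2p)^{-1/2})F(\tau)$, so the bound on $h$ is of the same order as the plateau height and no contradiction results. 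The paper sidesteps this by \emph{defining} the scales via $F(a_{n+1})=\frac54F(a_n)$ and confining each plateau to $[a_n,\kappa_na_n]\subset[a_n,a_{n+1}]$, so that the constraint gives $\int_{pa_n}^{\kappa_na_n}h\le\int_{a_n}^{a_{n+1}}f=\frac14F(a_n)$ while Lemma \ref{technical} guarantees plateau mass at least $\frac{35}{36}\cdot\frac{9}{25}F(a_n)>\frac14F(a_n)$. Finally, note that your route also discards the Fatou-norm diagonalization of Lemma \ref{nextcrucial}, which the paper needs to extract a single $p$ working at all scales; your fix of ``choosing $g$ first'' only dodges that issue if the single function $g$ really carried the dilation norm, which, by the first point, it does not.
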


\begin{proof}  We may suppose that $f$ is decreasing.  We let
$$ F(t)=\int_0^tf(s)\,ds.$$ Let us define a doubly infinite sequence $(a_n)_{n\in\mathbb Z}$ by $F(a_n)=(5/4)^n.$

We next introduce the family $\mathcal K$ of doubly infinite sequences $\kappa=(\kappa_n)_{n\in\mathbb Z}$ such that either $\kappa_n\in \mathbb N$ with $1\le \kappa_n< a_{n+1}/a_n$ or $\kappa_n=\infty.$  Then $\mathcal K$ is a complete lattice under the order $\kappa\le \kappa'$ if $\kappa_n\le \kappa'_n$ for all $n$.  We may define  the lattice operations $(\kappa\vee\kappa')_n=\max(\kappa_n,\kappa'_n)$ and $(\kappa\wedge\kappa')_n=\min(\kappa_n,\kappa'_n).$

For each $\kappa\in \mathcal K$ we define $\psi_{\kappa}\in E$ as follows.  Let $\Psi(t)=\Psi_{\kappa}(t)$ be the least increasing concave function such that $\Psi(0)\ge 0,$
$$\Psi(\kappa_na_n)\ge F(a_n), \qquad \text{ if } \kappa_n<\infty,$$ and
   $$ \Psi(a_{n})\ge 0 \qquad \text{ if }\kappa_n=\infty.$$ The existence and properties of $\Psi$ are guaranteed by applying Lemma \ref{concaveminorant} when  $\alpha_n=\kappa_na_n$ if $\kappa_n<\infty$ and $\alpha_n=a_{n}$ if $\kappa_n=\infty$ and $\beta_n=F(a_n)$ if $\kappa_n<\infty$ and $\beta_n=0$ if $\kappa_n=\infty.$ Since $F(a_n)\le F(\kappa_na_n)$ it is clear from Lemma \ref{concaveminorant} that $\Psi$ exists and $\Psi\le F.$ Furthermore $\Psi$ is piecewise affine on $(0,\infty)$ and we may define $\psi_{\kappa}=\Psi'$ is a nonnegative piecewise constant decreasing function on $(0,\infty)$.  Clearly $\psi_{\kappa}\in\Omega(f)\subset E.$

We note some elementary properties of the map $\kappa\to\psi_{\kappa}.$

\begin{lem}\label{elem}
\begin{equation}\label{11}
\psi_{\kappa}\preceq \psi_{\kappa'}, \qquad \text{ if } \kappa'\le \kappa
\end{equation}
\begin{equation}\label{12}
\psi_{\kappa\wedge\kappa'}\preceq \psi_{\kappa}\vee\psi_{\kappa'}, \qquad \kappa,\kappa'\in \mathcal K.
\end{equation}
\end{lem}

\begin{proof} \eqref{11} is quite trivial.

To see \eqref{12} note that
$$ \int_0^t\max(\psi_{\kappa}(s),\psi_{\kappa'}(s))\,ds\ge \max(\Psi_{\kappa}(t),\Psi_{\kappa'}(t)).$$  Now  if $\kappa_n\wedge\kappa'_n<\infty$ and $\kappa_n\le \kappa'_n$  we have
$$ \int_0^{\kappa_na_n}\max(\psi_{\kappa}(s),\psi_{\kappa'}(s))\,ds\ge \Psi_{\kappa}(\kappa_na_n)\ge F(a_n)$$ and with a similar inequality when $\kappa'_n<\kappa_n$ we obtain, from the definition of $\Psi_{\kappa\wedge\kappa'}$,
$$ \int_0^t\max(\psi_{\kappa}(s),\psi_{\kappa'}(s))\,ds\ge \Psi_{\kappa\wedge\kappa'}(t), \qquad 0\le t<\infty.$$
This proves \eqref{12}.
\end{proof}

\begin{lem}\label{technical}  Suppose $\kappa\in\mathcal K$ satisfies
\begin{equation}\label{assumption}
\max(\kappa_n,\kappa_{n+1})=\infty, \qquad n\in\mathbb{Z}.
\end{equation}
Then for any $n\in\mathbb Z$ such that $\kappa_n<\infty$ we have:
\begin{equation}\label{212}
\psi_{\kappa}(t)\ge \frac{9F(a_n)}{25\kappa_na_n} \qquad a_n\le t\le\kappa_na_n.
\end{equation}
\end{lem}

\begin{proof} If $f$ is not identically zero then $\psi_{\kappa}$ is only identically zero when $\kappa$ is identiclaly $\infty;$ we exclude this case so that $\Psi_{\kappa}(t)>0$ for $t>0.$
Observe first that $\psi_{\kappa}$ is constant on $(a_{n},\kappa_na_n).$  If for every $m<n$ such that $\kappa_m<\infty$  we have $\Psi_{\kappa}(\kappa_ma_m)>F(a_m)$ then
$$ \psi_{\kappa}(t)\ge \frac{F(a_n)}{\kappa_na_n}, \qquad 0<t\le \kappa_na_n.$$
 Otherwise, since $\Psi_{\kappa}(t)>0$ for all $t>0,$ we have that, by Lemma \ref{concaveminorant},  there exists  $m<n$ so that $\kappa_m<\infty$ and
$$\psi_{\kappa}(t)=\frac{\Psi_{\kappa}(\kappa_na_n)-F(a_m)}{\kappa_na_n-\kappa_ma_m}, \qquad \kappa_ma_m<t<\kappa_na_n.$$  Then
$$\psi_{\kappa}(t) \ge \frac{F(a_n)-F(a_m)}{\kappa_na_n-\kappa_ma_m}, \qquad a_n\le t\le\kappa_na_n.$$
Noting that $m\le n-2$ by \eqref{assumption} so that $F(a_m)\le (4/5)^2F(a_n)$  this implies that \eqref{212} holds for either alternative.
\end{proof}

For $\kappa\in\mathcal K$ and $r\in\mathbb N$ we will define a $\kappa^{[r]}\ge \kappa$ by suppressing the values of $\kappa$ which are less than $r$.  Precisely:
$$ \kappa^{[r]}_n=\begin{cases}
\kappa_n, \qquad \text{if } \kappa_n\ge r\\
\infty \qquad \text{ if } \kappa_n< r.
\end{cases}
$$

We next prove the following Lemma, which is the heart of the argument for Theorem \ref{main1}:

\begin{lem}\label{crucial}  Under the hypotheses of the theorem, we have that for any $\kappa\in\mathcal K$
$$ \lim_{r\to\infty}\|\psi_{{\kappa}^{[r]}}\|_E=0.$$
\end{lem}

\begin{proof} We will first prove the Lemma under the additional assumption that \eqref{assumption} holds.
  Since $\|\psi_{{\kappa}^{[r]}}\|_E$ is decreasing in $r$ (by \eqref{11}) it suffices to show that for given $\epsilon>0$ we can find $r$ so that $\|\psi_{{\kappa}^{[r]}}\|_E<\epsilon.$ By Proposition \ref{convexapprox} for any $\epsilon>0$ we can find a nonnegative decreasing function $h\le \psi_{\kappa}$ and an integer $p$ so that
\begin{equation}\label{210}
\int_{pa}^b h(t)\,dt\le \int_a^b f(t)\,dt, \qquad 0<pa<b<\infty,
\end{equation}
and
\begin{equation}\label{211}
\|\psi_{\kappa}-h\|_E<\epsilon/10.
\end{equation}

We shall take $r=36p.$   Let $v=10 (\psi_{\kappa}- h).$  We will show that $\psi_{\kappa^{[r]}}\preceq v.$  In order to do this we must show that if $\kappa_n^{[r]}<\infty$ we have
\begin{equation}\label{need}
F(a_n)\le \int_0^{\kappa_na_n}v^*(t)\,dt.
\end{equation}

If $\kappa_n^{[r]}<\infty$
\begin{align*} \int_0^{\kappa_na_n}v^*(t)\,dt &\ge \int_{pa_n}^{\kappa_na_n}v(t)\,dt\\ &=10 \left(\int_{pa_n}^{\kappa_na_n}\psi_{\kappa}(t)\,dt -\int_{pa_n}^{\kappa_na_n}h(t)\,dt\right)\\
&\ge 10 \left(\int_{pa_n}^{\kappa_na_n}\psi_{\kappa}(t)\,dt -\int_{a_n}^{\kappa_na_n}f(t)\,dt\right),\end{align*} by \eqref{211}.
Hence
by \eqref{212} of Lemma \ref{technical}
\begin{align*} \int_0^{\kappa_na_n}v^*(t)\,dt &\ge 10 \left(\frac{9(\kappa_na_n-pa_n)F(a_n)}{25\kappa_na_n}-\int_{a_n}^{a_{n+1}}f(t)\,dt\right)\\
&\ge 10\left( \frac{35}{36}\frac{9}{25}F(a_n)- \frac{1}4F(a_n)\right)\\
&= F(a_n).\end{align*}

This show that \eqref{need} holds and so $\psi_{\kappa^{[r]}}\preceq v$ and $\|\psi_{\kappa^{[r]}}\|_E<\epsilon.$
This completes the proof when \eqref{assumption} holds.

For the general case let us introduce $\kappa(0)_n=\kappa_n$ if $n$ is even and $\kappa(0)_n=\infty$ if $n$ is odd.  Similarly $\kappa(1)_n=\kappa_n$ if $n$ is odd and $\kappa(1)_n=\infty$ if $n$ is even. Both $\kappa(0)$ and $\kappa(1)$ satisfy \eqref{assumption} Then for an arbitrary $\kappa$ we have
$\kappa^{[r]}=\kappa(0)^{[r]}\wedge \kappa(1)^{[r]}$ and so by \eqref{12},
$$ \limsup_{r\to\infty}\|\psi_{\kappa^{[r]}}\|_E \le \limsup_{r\to\infty}\|\psi_{\kappa(0)^{[r]}}\|_E+\limsup_{r\to\infty}\|\psi_{\kappa(1)^{[r]}}\|_E=0.$$
\end{proof}

Next for any integer $p$ we define
$\gamma^{p}_n=p$ if $pa_n< a_{n+1}$ and $\gamma^{p}_n=\infty$ otherwise.  For each $q>p$ we define
$\gamma^{p,q}_n=p$ if $pa_n< a_{n+1}$ and $|n|\le q $ and $\gamma^{p,q}_n=\infty$ otherwise.
Let $\psi_p=\psi_{\gamma^{p}}$ and $\psi_{p,q}=\psi_{\gamma^{p,q}}.$

\begin{lem}\label{nextcrucial}  Under the hypotheses of the Theorem,
$$ \lim_{p\to\infty}\|\psi_p\|_E=0.$$
\end{lem}

\begin{proof}  Clearly $\|\psi_p\|_E$ is decreasing in $p$.  Assume $\|\psi_p\|_E>\epsilon>0$ for all $p\in\mathbb N.$
Since $E$ has a Fatou norm for each $p$ there exists $q(p)>p$ so that $\|\psi_{p,q(p)}\|_E>\epsilon.$  Let
$$ \kappa=\wedge_p\gamma^{p,q(p)}.$$  Thus $\kappa$ is given by the formula
$$ \kappa_n=\inf\{p: \ p<a_{n+1}/a_n,\ |n|\le q(p)\}$$ and $\kappa$ has the properties that $\kappa\le \gamma_{p,q(p)}$ for all $p$ and $\lim_{|n|\to\infty}\kappa_n=\infty.$

By Lemma \ref{crucial} there exists $r\in\mathbb N$ so that $\|\psi_{\kappa^{[r]}}\|_E<\epsilon.$  But then the set $\{n:\kappa_n<r\}$ is finite and so there is a choice of $p$ such that $p>a_{n+1}/a_n$ whenever $\kappa_n<r.$  Thus $\gamma^{p}_n=\infty$ if $\kappa_n<r.$  Thus
$$ \kappa^{[r]}\le \gamma^{p,q(p)}$$ and so by \eqref{11},
$$ \|\psi_{p,q(p)}\|_E <\epsilon$$ which gives a contradiction.
\end{proof}

We now can complete the proof of the Theorem.
We will show that if $p\in\mathbb N,$
\begin{equation}\label{311}
F(t) \le \frac45F(p^2t) +\frac54 \int_0^{p^2t}\psi_p(s)\,ds, \qquad 0<t<\infty.
\end{equation}
Indeed if \eqref{311} fails for some $t$ we can assume $a_n\le t<a_{n+1}$ for some $n\in\mathbb Z.$ We first argue that $a_{n+1}\le pa_n$.  Suppose, on the contrary, that $a_{n+1}>pa_n$. Then we have
$$\frac54\int_0^{p^2t}\psi_p(s)\,ds\ge \frac54 \int_0^{pa_n}\psi_p(s)\,ds\ge \frac54F(a_n)=F(a_{n+1})\ge F(t),$$ which contradicts our hypothesis. Next we show that $a_{n+2}\le pa_{n+1}.$ Indeed if $a_{n+2}>pa_{n+1},$ then $p^2t\geq pa_{n+1}$ and
$$\frac54\int_0^{p^2t}\psi_p(s)\,ds\ge \frac54\int_0^{pa_{n+1}}\psi_p(s)\,ds\ge \frac54 F(a_{n+1})>F(t).$$
But then $a_{n+2}\le p^2a_n$ and so
$$\frac45F(p^2t)\ge \frac45F(a_{n+2})=F(a_{n+1})>F(t)$$
and we have a contradiction. This establishes \eqref{311}.

Now if $\tau\ge 1$ we replace $t$ in \eqref{311} by $t/\tau$ and interpret the inequality in the form:
$$ \frac1\tau\sigma_\tau f \preceq \frac{1}{p^{-2}\tau}\sigma_{p^{-2}\tau}((4/5)f+(5/4)\psi_p).$$  Hence
$$\lim_{\tau\to\infty}\tau^{-1}\|\sigma_{\tau}f\|_E\le (4/5) \lim_{\tau\to\infty}\tau^{-1}\|\sigma_{\tau}f\|_E +(5/4)\lim_{\tau\to\infty}\tau^{-1}\|\sigma_\tau\psi_p\|_E$$ so that
$$ \lim_{\tau\to\infty}\tau^{-1}\|\sigma_{\tau}f\|_E\le (5^2/4)\|\psi_p\|_E.$$  Combining with Lemma \ref{nextcrucial} we obtain the theorem.
\end{proof}

The case when $f\in L_1$ is handled by reduction to the previous case:

\begin{thm}\label{main2}  Let $E$ be a fully symmetric Banach function space on $(0,\infty)$ with a Fatou norm. Suppose $f$ is a decreasing nonnegative function such that $f\in E_+\cap L_1$ and
$\Omega(f)=\mathcal Q_E(f).$  Then
$$ \lim_{\tau\to\infty}\tau^{-1}\|\sigma_\tau(f^*)\|_{E+L_\infty}=0.$$
\end{thm}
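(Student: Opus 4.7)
The plan is to reduce to Theorem \ref{main1} by working in the enlarged fully symmetric space $\tilde E := E + L_\infty$ on $(0,\infty)$, with norm $\|g\|_{\tilde E} = \|g^*\chi_{(0,1)}\|_E$, and by lifting $f^*$ to a function outside $L_1$.

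First, one verifies that $\tilde E$ inherits the Fatou norm of $E$: if $0 \le h_n \uparrow h$ with $h \in \tilde E$, then $h_n^* \uparrow h^*$, so $h_n^*\chi_{(0,1)} \uparrow h^*\chi_{(0,1)}$, and the Fatou property of $E$ yields $\|h_n\|_{\tilde E} \to \|h\|_{\tilde E}$. Since $L_\infty \subset \tilde E$, the constant $\chi_{(0,\infty)}$ lies in $\tilde E \setminus L_1$. The domination $\|\cdot\|_{\tilde E} \le \|\cdot\|_E$ gives $\mathcal Q_{\tilde E}(f) \supset \mathcal Q_E(f) = \Omega(f)$, hence $\Omega(f) = \mathcal Q_{\tilde E}(f)$. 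Moreover, for any $c > 0$, $\Omega(c\chi_{(0,\infty)}) = \{h : \|h\|_\infty \le c\}$ is already $\tilde E$-closed and convex, so $\mathcal Q_{\tilde E}(c\chi_{(0,\infty)}) = \Omega(c\chi_{(0,\infty)})$.

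The crucial step is to introduce the auxiliary function $\tilde f := f^* + c\chi_{(0,\infty)}$ for a fixed $c > 0$; then $\tilde f \in \tilde E_+ \setminus L_1$, $\tilde f^* = f^* + c$, and one needs $\Omega(\tilde f) = \mathcal Q_{\tilde E}(\tilde f)$. Given $g \in \Omega(\tilde f)$, decompose $g^* = h + k$ with $h := (g^* - c)_+$ and $k := \min(g^*, c)$. Both are decreasing, $k \le c$ pointwise, and case analysis on the level set $\{g^* \ge c\}$ derives $h \preceq f^*$ from $g^* \preceq \tilde f^*$. Given $\epsilon > 0$, Proposition \ref{convexapprox} applied to $h \in \Omega(f) = \mathcal Q_{\tilde E}(f)$ yields a decreasing $\bar h \le h$ and an integer $p$ with $\|h - \bar h\|_{\tilde E} < \epsilon$ and $\int_{pa}^b \bar h \le \int_a^b f^*$ for all $0 < pa < b$. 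Then $\bar h + k \le g^*$ is decreasing, $\|g^* - (\bar h + k)\|_{\tilde E} < \epsilon$, and $\int_{pa}^b (\bar h + k) \le \int_a^b f^* + c(b-a) = \int_a^b \tilde f^*$ (using $k \le c$). The converse direction of Proposition \ref{convexapprox} gives $g^* \in \mathcal Q_{\tilde E}(\tilde f)$, and rearrangement-invariance of $\mathcal Q_{\tilde E}(\tilde f)$ yields $g \in \mathcal Q_{\tilde E}(\tilde f)$.

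With $\Omega(\tilde f) = \mathcal Q_{\tilde E}(\tilde f)$ in hand, Theorem \ref{main1} applies to $\tilde f$ in $\tilde E$ and gives $\lim_{\tau\to\infty}\tau^{-1}\|\sigma_\tau \tilde f\|_{\tilde E} = 0$. Since $0 \le f^* \le \tilde f$, the monotonicity of the $\tilde E$-norm yields $\tau^{-1}\|\sigma_\tau f^*\|_{E+L_\infty} = \tau^{-1}\|\sigma_\tau f^*\|_{\tilde E} \le \tau^{-1}\|\sigma_\tau \tilde f\|_{\tilde E} \to 0$, the desired conclusion. The principal obstacle is the decomposition step: a pointwise estimate $h \le f^*$ fails in general, so one must settle for $h \preceq f^*$ (derived from $g^* \preceq \tilde f^*$ by a two-case argument around $\{g^* = c\}$); thereafter the simultaneous approximation is clean because the integral constraint for $k$ is automatic.
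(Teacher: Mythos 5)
Your proof is correct and follows essentially the same route as the paper: lift $f$ to $f^*+c\notin L_1$ in $E+L_\infty$, transfer the equality $\Omega=\mathcal Q$ via the decomposition $g^*=(g^*-c)_+ + (g^*\wedge c)$, and invoke Theorem \ref{main1}. The paper packages the transfer more briskly through the set identity $\mathcal Q(f+1)=\mathcal Q(f)+\mathcal Q(1)$ together with $\Omega(f+1)=\Omega(f)+\Omega(1)$, whereas you verify it by running Proposition \ref{convexapprox} in both directions; the final "rearrangement-invariance" remark is the only loose end, but it is immaterial since Theorem \ref{main1} only ever uses the hypothesis through decreasing representatives.
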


\begin{proof} An easy computation shows that $\mathcal Q(f+1)=\mathcal Q(f)+\mathcal Q(1).$  Hence
$\mathcal Q_{E+L_\infty}(f+1)\supset \mathcal Q_E(f)+\mathcal Q_{L_\infty}(1)=\Omega(f)+\Omega(1).$  If $0\le g\in \Omega(f+1)$ then $g-g\wedge 1\in \Omega(f)$ and $g\wedge 1 \in \Omega(1)$ so that $\Omega(f)+\Omega(1)=\Omega(f+1).$
Hence $\mathcal Q_{E+L_\infty}(f+1)=\Omega(f+1)$ and we can apply Theorem \ref{main1}.
\end{proof}

\section{The main results}
We can next state our main results:

\begin{thm}\label{Main3}  Let $E$ be a fully symmetric Banach function space on $(0,\infty)$ with a Fatou norm, and such that $E\setminus L_1\neq\emptyset.$   Suppose $f\in E.$  Then $\Omega(f)=\mathcal Q_E(f)$ if and only if
$\lim_{\tau\to \infty}\tau^{-1}\|\sigma_\tau(f^*)\|_E=0.$
\end{thm}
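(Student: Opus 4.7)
The plan is to prove Theorem \ref{Main3} by splitting into two directions, and in the necessity direction into two cases depending on whether $f^*\in L_1$ or not.

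For sufficiency, I would simply cite \cite{SukochevZanin2009}: as the introduction notes, those results show that under the present hypotheses (full symmetry and Fatou norm), condition \eqref{SZ1} implies $\mathcal Q_E(f)=\Omega(f)$ whenever $E\not\subset L_1$, which is the case here because $E\setminus L_1\ne\emptyset$. So no work is required on this side beyond invoking that reference.

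For necessity, assume $\Omega(f)=\mathcal Q_E(f)$. Replacing $f$ by $f^*$ is harmless since $\Omega(f)=\Omega(f^*)$ and $\mathcal Q_E(f)=\mathcal Q_E(f^*)$, so we may assume $f$ is nonnegative and decreasing. Now split:
\emph{Case 1:} $f\notin L_1$. Then Theorem \ref{main1} applies directly and gives $\tau^{-1}\|\sigma_\tau f\|_E\to 0$.
\emph{Case 2:} $f\in L_1\cap E$. Then Theorem \ref{main2} gives only the weaker conclusion
$$\lim_{\tau\to\infty}\tau^{-1}\|\sigma_\tau f\|_{E+L_\infty}=0.$$
To upgrade this to the $E$-norm I invoke Lemma \ref{L1}, whose hypothesis $E\setminus L_1\ne\emptyset$ is exactly the standing assumption of the theorem: it asserts
$$\lim_{\tau\to\infty}\tau^{-1}\|\sigma_\tau f\|_E=\lim_{\tau\to\infty}\tau^{-1}\|\sigma_\tau f\|_{E+L_\infty}=0,$$
which is the desired conclusion.

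The argument is essentially an assembly of the machinery already constructed, so there is no substantial new obstacle. The only point that could seem delicate is why the hypothesis $E\setminus L_1\ne\emptyset$ appears in the statement: it is not needed for the inner workings of Theorems \ref{main1} or \ref{main2}, but it is precisely what allows the reduction of the $L_1$ case to the same $E$-norm conclusion via Lemma \ref{L1}. The remaining concern is purely cosmetic: making sure the definition of $\mathcal Q_E$ is insensitive to replacing $f$ by $f^*$ (immediate from $\Omega(f)=\Omega(f^*)$ and the symmetric-rearrangement stability of the convex set $\{g:g^*\le f^*\}$).
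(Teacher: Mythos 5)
Your proposal is correct and follows essentially the same route as the paper: sufficiency by citing the results of \cite{SukochevZanin2009}, and necessity by splitting into the cases $f\notin L_1$ (Theorem \ref{main1}) and $f\in L_1$ (Theorem \ref{main2} followed by Lemma \ref{L1}, whose hypothesis $E\setminus L_1\neq\emptyset$ is exactly the standing assumption). Your remark about why that hypothesis appears in the statement matches the paper's use of it.
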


\begin{proof}  If $\lim_{\tau\to\infty}\tau^{-1}\|\sigma_\tau(f^*)\|_E=0$ then $\Omega_+(f)\subset \mathcal Q_E(f)$ by Theorem 25 of \cite{SukochevZanin2009}; thus $\Omega(f)=\mathcal Q_E(f).$  Conversely if $\Omega(f)=\mathcal Q_E(f)$ we have
either $$\lim_{\tau\to\infty}\tau^{-1}\|\sigma_\tau(f)\|_E=0$$ (when $f\notin L_1$ by Theorem \ref{main1}) or $$\lim_{\tau\to \infty}\tau^{-1}\|\sigma_\tau(f)\|_{E+L_\infty}=0$$ (when $f\in L_1$ by Theorem \ref{main2}).  Then Lemma \ref{L1} shows that in both cases we have $\lim_{\tau\to \infty}\tau^{-1}\|\sigma_\tau(f)\|_E=0$.\end{proof}

\begin{thm}\label{Main4} Let $E$ be a fully symmetric Banach function space on $(0,\infty)$ with a Fatou norm, and such that $E\subset L_1$.  If $f\in E$ then $\Omega(f)=\mathcal Q_E(f)$ if and only if
$\lim_{\tau\to \infty}\tau^{-1}\|\sigma_\tau(f^*)\|_{E+L_\infty}=\lim_{\tau\to \infty}\tau^{-1}\|\sigma_\tau(f^*)\chi_{(0,1)}\|_{E}=0.$
\end{thm}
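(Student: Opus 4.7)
The plan is to parallel the proof of Theorem \ref{Main3}, substituting the $L_1$-appropriate results. The main point is a simple observation that makes the two displayed limits in the statement coincide, so that there is really only one condition to verify.

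First I would observe that since $f^*$ is nonnegative and decreasing, so is $\sigma_\tau(f^*)$ for every $\tau>0$; in particular $(\sigma_\tau(f^*))^*=\sigma_\tau(f^*)$. By the definition of $\|\cdot\|_{E+L_\infty}$ given in Section \ref{prelim}, this yields the identity
$$\|\sigma_\tau(f^*)\|_{E+L_\infty}=\|\sigma_\tau(f^*)\chi_{(0,1)}\|_E$$
for every $\tau$. Consequently the two limits in the theorem coincide as numerical quantities for each $\tau$, and it suffices to show that $\Omega(f)=\mathcal Q_E(f)$ is equivalent to $\lim_{\tau\to\infty}\tau^{-1}\|\sigma_\tau(f^*)\chi_{(0,1)}\|_E=0$.

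Without loss of generality $f=f^*\ge 0$, since both $\Omega(f)$ and $\mathcal Q_E(f)$ depend only on $f^*$. For sufficiency, this limit is exactly the local Braverman--Mekler condition \eqref{SZ2} relevant to the case $E\subset L_1$, so Theorem 25 of \cite{SukochevZanin2009} gives $\Omega_+(f)\subset\mathcal Q_E(f)$; combining with the standard reduction from the positive part of the orbit to the full orbit already used in the proof of Theorem \ref{Main3} yields $\Omega(f)=\mathcal Q_E(f)$.

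For necessity, assume $\Omega(f)=\mathcal Q_E(f)$. Since $E\subset L_1$ we have $f\in E_+\cap L_1$ and $f$ is decreasing, so Theorem \ref{main2} applies directly and produces $\lim_{\tau\to\infty}\tau^{-1}\|\sigma_\tau(f^*)\|_{E+L_\infty}=0$; the identity above then shows the second limit is also zero. There is no substantial obstacle here: the statement is essentially a bookkeeping exercise combining Theorem \ref{main2} with the sufficiency half of \cite{SukochevZanin2009}, together with the small observation that the $E+L_\infty$-norm agrees with the $\|(\cdot)\chi_{(0,1)}\|_E$-norm on nonnegative decreasing functions.
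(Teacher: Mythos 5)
Your proof is correct and follows essentially the same route as the paper's: necessity via Theorem \ref{main2} (which applies directly since $E\subset L_1$ forces $f\in E_+\cap L_1$), sufficiency via the Sukochev--Zanin result for condition \eqref{SZ2}, plus the observation that $\|\sigma_\tau(f^*)\|_{E+L_\infty}=\|\sigma_\tau(f^*)\chi_{(0,1)}\|_E$ because $\sigma_\tau(f^*)$ is nonnegative and decreasing, so the two stated limits coincide. The only slip is the citation: for the case $E\subset L_1$ the sufficiency is Theorem 24 of \cite{SukochevZanin2009}, not Theorem 25 (which covers the case $E\setminus L_1\neq\emptyset$ used in Theorem \ref{Main3}).
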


\begin{proof}  The proof is very similar to that of Theorem \ref{Main3} using instead Theorem 24 of \cite{SukochevZanin2009}.
\end{proof}

We first give the extension to function spaces on $(0,1).$

\begin{thm}\label{Main30}  Let $E$ be a fully symmetric Banach function space on $(0,1)$ with a Fatou norm.    Suppose $f\in E.$  Then $\Omega(f)=\mathcal Q_E(f)$ if and only if
$\lim_{\tau\to \infty}\tau^{-1}\|\sigma_\tau(f^*)\|_E=0.$
\end{thm}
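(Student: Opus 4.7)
The plan is to reduce to Theorem \ref{Main3} via the ``add-a-constant'' device used to prove Theorem \ref{main2}. For $(\Leftarrow)$ I would invoke the $(0,1)$-analogue of Theorem 25 of \cite{SukochevZanin2009}. For $(\Rightarrow)$, assume $\Omega(f)=\mathcal{Q}_E(f)$ with $f=f^*\ge 0$, let $f_0$ be the extension of $f$ by zero to $(0,\infty)$, set $\bar{f}=f_0+\chi_{(0,\infty)}$, and introduce
\[
\bar{E}=\{g\in L_1+L_\infty(0,\infty):g^*\chi_{(0,1)}\in E\},\qquad \|g\|_{\bar{E}}=\|g^*\chi_{(0,1)}\|_E,
\]
the natural analogue, for $E$ living on $(0,1)$, of the $E+L_\infty$ construction recalled in the preliminaries. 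A direct verification should show that $\bar{E}$ is a fully symmetric Banach function space on $(0,\infty)$ with Fatou norm, that $L_\infty\subset\bar{E}$ (so $\bar{E}\setminus L_1\ne\emptyset$), and that $\bar{f}\in\bar{E}$.

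The core step will be to establish $\Omega(\bar{f})=\mathcal{Q}_{\bar{E}}(\bar{f})$. Following the scheme of Theorem \ref{main2}, I would first prove the identities
\[
\mathcal{Q}(\bar{f})=\mathcal{Q}(f_0)+\mathcal{Q}(\chi_{(0,\infty)}),\qquad \Omega(\bar{f})=\Omega(f_0)+\Omega(\chi_{(0,\infty)}),
\]
the second via the decomposition $g=(g-g\wedge 1)+g\wedge 1$ for $g\in\Omega(\bar{f})_+$. Since $L_\infty\subset\bar{E}$ and $\mathcal{Q}_{L_\infty}(\chi_{(0,\infty)})=\Omega(\chi_{(0,\infty)})$, this reduces matters to showing $\Omega(f_0)\subset\mathcal{Q}_{\bar{E}}(f_0)$. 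For $g\in\Omega(f_0)$ one has $g^*\chi_{(0,1)}\preceq f^*$ on $(0,1)$, so by hypothesis $g^*\chi_{(0,1)}\in\mathcal{Q}_E(f)$; I would then tile the corresponding $E$-approximating convex combinations along $(0,\infty)$ to produce the required $\bar{E}$-approximation of $g$, exploiting that $\|\cdot\|_{\bar{E}}$ sees only the top unit of mass of the decreasing rearrangement.

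With $\Omega(\bar{f})=\mathcal{Q}_{\bar{E}}(\bar{f})$ in hand, Theorem \ref{Main3} applied to $\bar{f}\in\bar{E}$ yields $\tau^{-1}\|\sigma_\tau\bar{f}\|_{\bar{E}}\to 0$. For $\tau\ge 1$ a direct computation gives $(\sigma_\tau\bar{f})|_{(0,1)}=\sigma_\tau f^*+1$; since $\bar{f}$ is decreasing on $(0,\infty)$, this yields $\|\sigma_\tau\bar{f}\|_{\bar{E}}=\|\sigma_\tau f^*+1\|_{E(0,1)}\ge\|\sigma_\tau f^*\|_E-1$, and dividing by $\tau$ gives the desired $\tau^{-1}\|\sigma_\tau f^*\|_E\to 0$.

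The hard part will be the tiling step producing an $\bar{E}$-approximation of a generic $g\in\Omega(f_0)$: elements of $\mathcal{Q}(f_0)$ are convex combinations of functions whose rearrangement has support of measure at most $1$, so any tiling that spreads them out to match a $g$ with large support necessarily dilutes the amplitude of the combination. The key is to arrange this dilution to sit past the first unit of mass of the rearrangement, where $\|\cdot\|_{\bar{E}}$ is blind to it; pushing this through carefully, together with the Fatou property of $E$, should close the argument.
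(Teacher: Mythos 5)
Your architecture (extend by zero, add a constant to land in a space on $(0,\infty)$ containing $L_\infty$, apply Theorem \ref{Main3}) is a legitimate variant of what the paper does -- the paper instead passes to the space $F$ on $(0,\infty)$ normed by $\max(\|g^*\chi_{(0,1)}\|_E,\|g\|_{L_1})$ and reduces to Theorem \ref{Main4}, with the constant-adding happening inside Theorem \ref{main2}. The routine verifications ($\bar E$ fully symmetric with Fatou norm, $\Omega(\bar f)=\Omega(f_0)+\Omega(\chi_{(0,\infty)})$, the final estimate $\|\sigma_\tau\bar f\|_{\bar E}\ge\|\sigma_\tau f^*\|_E-1$) are all fine. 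But the step you yourself flag as ``the hard part'' -- showing $\Omega(f_0)\subset\mathcal Q_{\bar E}(f_0)$ -- is a genuine gap, and it is precisely the technical heart of the theorem. The hypothesis only tells you that elements of $\Omega(f_0)$ whose support has measure at most $1$ lie in $\mathcal Q_{\bar E}(f_0)$; a generic $g\in\Omega(f_0)$ (say $g$ decreasing with support of measure $N>1$) is not reached by your tiling device. If $v_n\in\mathcal Q(f)$ approximates the $n$-th unit block of $g$ and you set $u=\sum_{n<N}v_n(\cdot-n)$, then $u$ is a \emph{sum}, not a convex combination, of elements of $\mathcal Q(f_0)$: one only gets $\int_0^t u^*\le N\int_0^t f^*$ for small $t$, so $u\in N\,\mathcal Q(f_0)$ but in general $u\notin\alpha\,\mathcal Q(f_0)$ for $\alpha$ close to $1$. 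Rescaling by $1/N$ restores membership but destroys the approximation on the first block, and the dilution cannot be ``pushed past the first unit of mass'': the obstruction is the Hardy--Littlewood constraint $\int_0^t u^*\le\int_0^t f_0^*$ near $t=0$ (and the total mass bound $\|u\|_1\le\|f\|_1$), which every element of $\mathcal Q(f_0)$ must satisfy, not the insensitivity of $\|\cdot\|_{\bar E}$ to the tail.

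What is missing is the mechanism by which the paper packs the tail of $g$ back into $(0,1)$: given $\epsilon>0$ one constructs an explicit majorant $h\in\Omega(f)$ supported in $(0,1]$ (of the form $g+(1-\alpha)f$ near the origin, then a constant plus $(1-\alpha)f$, then $f$) such that $(1-\epsilon)g\preceq h$ \emph{and} $(1-\epsilon)g\le h$ pointwise on an initial interval. The hypothesis then applies to $h$ (its support has measure $\le1$), and Lemma \ref{needed} -- which rests on the quantitative membership criterion $\int_{pa}^b g\le\theta\int_a^b h$ of Theorem 6.3 of \cite{KaltonSukochev2008}, the same criterion underlying Proposition \ref{convexapprox} -- converts the pointwise domination near $0$ plus submajorization into $(1-\epsilon)g\in\mathcal Q(h)$-closure. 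Some construction of this kind (or an equivalent verification of the $\int_{pa}^b$-criterion for a candidate approximant built from $v$ and the tail of $g$) is unavoidable, and your proposal contains neither it nor a substitute. The ($\Leftarrow$) direction via the sufficiency results of \cite{SukochevZanin2009} is consistent with what the paper asserts and is not at issue.
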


\begin{proof}  We define $F$ to be the function space on $(0,\infty)$ defined by $f\in F$ if and only if $f^*\chi_{(0,1)}\in E$ and $f\in L_1$ with the norm
$$\|f\|_F=\max(\|f^*\chi_{(0,1)}\|_E,\|f^*\|_{L_1}).$$
Suppose $f\in E$ is nonnegative and decreasing.  We will show that, regarding $f$ as an member of $F,$ we have $\Omega(f)=\mathcal Q_F(f).$  Note that the hypothesis $\Omega(f)=\mathcal Q_E(f)$ on $(0,1)$ implies only that if $g\in F$ and $g\in \Omega(f)$ then $g\in \mathcal Q_F(f)$ provided $\mu ({\rm supp}\, g)\le 1.$   We will show, however, that $\Omega(f)=\mathcal Q_F(f)$ and then the Theorem follows.

We will need the following Lemma:

\begin{lem}\label{needed}  Let $h\in F$ be nonnegative and decreasing.  Suppose $g\in F$ is nonnegative and decreasing and satisfies the conditions such that $g\preceq h$ and $g(x)=0$ for some $0<x<\infty.$  If there exists $c>0$ so that $g(t)\le h(t)$ for $0<t\le c$ then $g\in \mathcal Q_F(h).$
\end{lem}

\begin{proof}  For any $\theta>1$ we may pick $p>x/c$ so that
$$ \int_0^c h(s)\,ds \le \theta \int_{x/p}^c h(s)\,ds.$$
Then if $0<pa<b<\infty$ with $c\le b\le x$ we have
$$ \int_{pa}^b g(s)\,ds \le \int_0^b h(s)\,ds \le \theta \int_{x/p}^b h(s)\,ds \le \theta\int_{pa}^b h(s)\,ds.$$
The same inequality holds trivially if $b>x$ or $b<c.$  Thus by Theorem 6.3 of \cite{KaltonSukochev2008} we have
$g\in \lambda Q(h)$ for any $\lambda>1$ and the Lemma follows.\end{proof}

We continue the proof of the theorem.  We will assume, without loss of generality that $\int_0^1 f(t)\,dt=1.$  First suppose $g\in\Omega(f)$ is nonnegative, not identically zero, and decreasing and satisfies $g(x)=0$ for some $0<x<\infty.$  Given $\epsilon>0$ we may find $c_0>0$ so that
$$ \int_0^{c_0}f(s)\,ds <\epsilon/2.$$
Let $$\alpha =\sup_{0<t\le c_0}\frac{\int_0^t g(s)\,ds}{\int_0^t f(s)\,ds}.$$  We have $\alpha>0$ and we may pick $0<\beta<\alpha$ with $(\alpha-\beta)<\epsilon/2$ and then $0<c<c_0$ with
$$ g(c)>\beta f(c).$$
Let $c'\ge c$ be the least solution of
$$ \alpha\int_0^{c'}f(s)\,ds = \int_0^{c}g(s)\,d s+ (c'-c)g(c).$$

We now define
$$ h(t)=\begin{cases} g(t)+(1-\alpha) f(t), \qquad 0<t\le c\\
g(c)+(1-\alpha)f(t), \qquad c<t\le \min(c',1)\\
f(t), \qquad \qquad\qquad\qquad \min(c',1)<t\le 1\\
0 \qquad\qquad\qquad\qquad\qquad t\ge 1.\end{cases}$$

From the construction we have $h\in\Omega(f).$   Thus $h\in\mathcal Q_F(f).$ For any $t\le \min(c',1)$ we have
$$ \int_0^t g(s)\,ds \le \int_0^t h(s)\,ds.$$
If $c'<1$ then
$$ \int_0^t h(s)\,ds=\int_0^t f(s)\,ds \ge \int_0^t g(s)\,ds , \qquad t>c'.$$
If $c'\ge 1$ then
\begin{align*} \int_0^1 h(s)\,ds &\ge (1-\alpha)\int_0^1 f(s)\,ds +\int_0^c g(s)\,ds +g(c)(1-c)\\
&\ge (1-\alpha)\int_0^1 f(s)\,ds +\beta\int_0^c f(s)\,ds-\epsilon/2 +\beta f(c)(1-c)\\
&\ge (1-\alpha)\int_0^1 f(s)\,ds + \beta \int_0^1 f(s)\,ds -\epsilon/2\\
&\ge 1 -\epsilon.\end{align*}

Hence $(1-\epsilon)g\preceq h$ and by Lemma \ref{needed} we have $(1-\epsilon)g\in\mathcal Q_F(h)$.  Since $\epsilon>0$ is arbitrary we have $g\in \mathcal Q_F(h)\subset \mathcal Q_F(f).$

Finally let us note for general nonnegative decreasing $g\in F$ we have $\lim_{m\to\infty}\|g-g\chi_{(0,m)}\|_F=0$ so that we have $\Omega_F(f)=\mathcal Q_F(f).$

Now the result reduces to Theorems \ref{Main3} and \ref{Main4}.
\end{proof}

The extension to sequence spaces requires similar type of argument:

\begin{thm}\label{Main5}  Let $E$ be a fully symmetric Banach sequence space with a Fatou norm and such that
$E\setminus \ell_1\neq\emptyset$.    Suppose $\xi\in E.$  Then $\Omega(\xi)=\mathcal Q_E(\xi)$ if and only if
$\lim_{m\to \infty}m^{-1}\|\sigma_m(\xi^*)\|_E=0.$
\end{thm}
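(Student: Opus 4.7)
The plan is to reduce Theorem \ref{Main5} to the function-space result Theorem \ref{Main3} by lifting the sequence space $E$ to a canonically associated fully symmetric Banach function space $\tilde E$ on $(0,\infty)$. To each sequence $\zeta=(\zeta_n)_{n\ge 1}$ associate the step function $f_\zeta(t)=\zeta_{\lceil t\rceil}$, and define
\[
\tilde E=\Bigl\{f\in L_1+L_\infty(0,\infty):\bigl(\textstyle\int_{n-1}^n f^*(s)\,ds\bigr)_{n\ge 1}\in E\Bigr\},\quad \|f\|_{\tilde E}=\Bigl\|\bigl(\textstyle\int_{n-1}^n f^*(s)\,ds\bigr)_{n\ge 1}\Bigr\|_E.
\]
Routine verifications show that $\tilde E$ is a fully symmetric Banach function space with Fatou norm: the triangle inequality follows from $\int_0^n(f+g)^*\le\int_0^n f^*+\int_0^n g^*$ together with full symmetry of $E$, and the Fatou property transfers from $E$ via monotone convergence. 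Since $E\setminus\ell_1\ne\emptyset$ we have $\tilde E\setminus L_1\ne\emptyset$, and $\zeta\mapsto f_\zeta$ is an isometric embedding of $E$ into $\tilde E$.

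The dilations are compatible: for $m\in\mathbb N$ the identity $\lceil\lceil t\rceil/m\rceil=\lceil t/m\rceil$ yields $\sigma_m f_\xi^*=f_{\sigma_m\xi^*}$, so $\|\sigma_m f_\xi^*\|_{\tilde E}=\|\sigma_m\xi^*\|_E$; the monotonicity sandwich $\sigma_{\lfloor\tau\rfloor}f_\xi^*\le\sigma_\tau f_\xi^*\le\sigma_{\lceil\tau\rceil}f_\xi^*$ for $\tau\ge 1$ shows $\lim_{\tau\to\infty}\tau^{-1}\|\sigma_\tau f_\xi^*\|_{\tilde E}=0$ if and only if $\lim_{m\to\infty}m^{-1}\|\sigma_m\xi^*\|_E=0$. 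Hence Theorem \ref{Main3} applied to $\tilde E$ reduces Theorem \ref{Main5} to the equivalence
\[ \Omega(\xi)=\mathcal Q_E(\xi)\ \text{in}\ E\iff\Omega(f_\xi)=\mathcal Q_{\tilde E}(f_\xi)\ \text{in}\ \tilde E. \]

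I would establish this equivalence via block averaging. For the implication $\Rightarrow$, take $g\in\Omega(f_\xi)$ decreasing nonnegative, set $\eta_n=\int_{n-1}^n g(s)\,ds$, so that $\eta\in\Omega(\xi)=\mathcal Q_E(\xi)$; sequence approximations $\frac1k\sum\zeta^{(i)}\to\eta$ with $(\zeta^{(i)})^*\le\xi^*$ lift to step-function approximations $\frac1k\sum f_{\zeta^{(i)}}\to f_\eta$ in $\tilde E$, and the approximation of the original $g$ is handled by verifying the criterion of Proposition \ref{convexapprox} for $g$ using that $g-f_\eta$ has zero integral on each integer interval. For the converse, $\eta\in\Omega(\xi)$ yields $f_\eta\in\Omega(f_\xi)=\mathcal Q_{\tilde E}(f_\xi)$, and function-space approximations $\frac1k\sum h_i\to f_\eta$ with $h_i^*\le f_\xi^*$ are block-averaged to sequence approximations $\zeta^{(i)}_n=\int_{n-1}^n h_i(s)\,ds$ of $\eta$ in $E$.

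The main technical obstacle appears in the converse direction: the block-averaged sequences $\zeta^{(i)}$ inherit only the Hardy--Littlewood--Polya majorization $\zeta^{(i)}\preceq\xi$ rather than the pointwise bound $(\zeta^{(i)})^*\le\xi^*$ required for membership in $\mathcal Q(\xi)$. I would overcome this by replacing each $\zeta^{(i)}$ with a suitable truncation pointwise bounded by $\xi^*$, with the error in $\|\cdot\|_E$ controlled by the sequence analogue of Proposition \ref{convexapprox}. Once this refinement is complete, the chain of equivalences finishes the proof.
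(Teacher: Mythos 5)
Your overall strategy --- lift $E$ to a fully symmetric function space on $(0,\infty)$ via block integrals of the decreasing rearrangement, transfer the dilation condition through the identification of sequences with step functions, and reduce everything to the equivalence $\Omega(\xi)=\mathcal Q_E(\xi)\iff\Omega(f_\xi)=\mathcal Q_{\tilde E}(f_\xi)$ --- is exactly the route the paper takes (its space $F$ carries the norm $f^*(0)+\|(\int_{n-1}^nf^*)_n\|_E$, a cosmetic difference from your $\tilde E$, and the equivalence is its Theorem \ref{equiv}). The problem is that this equivalence, where all the work lies, is only sketched, and the repair you propose for the converse direction does not work. Truncating the block-averaged sequences $\zeta^{(i)}$ so that they lie pointwise below $\xi^*$ can lose a fixed proportion of mass: take $\xi=e_1$ and $h=\chi_{(1/2,3/2)}$, so that $h^*\le f_\xi^*$ but $\zeta=(1/2,1/2,0,\dots)$, whose truncation below $\xi^*=(1,0,0,\dots)$ is $(1/2,0,0,\dots)$, an error of $\tfrac12\|e_1\|_E$ that no choice of approximants makes small. (The correct decomposition of this $\zeta$ is $\tfrac12e_1+\tfrac12e_2$, a convex combination of differently supported sequences, not a truncation.) So "the sequence analogue of Proposition \ref{convexapprox}" cannot control the truncation error; it has to be used for a different purpose.

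The correct move --- the one the paper makes --- is to abandon the individual approximants $h_i$ altogether. Apply Proposition \ref{convexapprox} once to $f_\eta\in\mathcal Q_{\tilde E}(f_\xi)$ to produce a single decreasing $0\le h\le f_\eta$ with $\|f_\eta-h\|$ small and $\int_{pa}^bh(s)\,ds\le\int_a^bf_\xi(s)\,ds$ for $0<pa<b$; block-averaging this inequality gives $\sum_{k=pm+1}^n\zeta_k\le\sum_{k=m+1}^n\xi_k$ for $\zeta_n=\int_{n-1}^nh(s)\,ds$, and Theorem 5.5 of \cite{KaltonSukochev2008} then certifies $\zeta\in\lambda\mathcal Q(\xi)$ for every $\lambda>1$ directly from this shifted majorization, with no pointwise bound required. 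A similar caveat applies to your forward direction: knowing that $g-f_\eta$ integrates to zero over each block does not verify the criterion of Proposition \ref{convexapprox} for $g$, since the intervals $(pa,b)$ appearing there are not unions of blocks. The paper instead splices $g$ on $(0,m]$ (with $m$ chosen so that $g^*(m)-\lim_ng^*(n)$ is small) with the Proposition \ref{convexapprox} approximant of the block-average of $g$ on $(m,\infty)$, and verifies the criterion for the spliced function by a separate computation. These are the two places where your outline needs an actual argument.
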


\begin{proof}  We consider the Banach function space $F$ of all bounded functions such that
$(f^*(0),f^*(1),\ldots)\in E$ with the norm
$$ \|f\|_F=f^*(0)+\|(a_n)_{n=1}^\infty\|_E,$$
where $f^*(0)=\|f\|_{L_\infty}$ and $a_n:=\int_{n-1}^nf^*(s)ds$,
$n\ge 1$. Then let $F(\mathbb N)$ be the subspace of $F$ of all
functions $f$ which are constant on each interval $(n-1,n]$.
Clearly, the Banach spaces $(F(\mathbb N), \|\cdot\|_F)$ and $(E,
\|\cdot\|_E)$ are linearly isomorphic, in particular
$$
\|\xi\|_E\leq \|\xi\|_F\leq 2 \|\xi\|_E, \quad \forall \xi\in
E=F(\mathbb N).
$$
Let $\mathbb E$ denote the conditional expectation operator
$$ \mathbb Ef =\sum_{n\in\mathbb N} \chi_{(n-1,n]}\int_{n-1}^nf(t)\,dt.$$

Suppose $\xi$ is a nonnegative decreasing sequence and let
$$f=\sum_{j=1}^{\infty}\xi_j\chi_{(j-1,j]}\in F.$$
The result will follow from:

\begin{thm}\label{equiv} $\Omega(\xi)=\mathcal Q_E(\xi)$ if and only if $\Omega(f)=\mathcal Q_F(f).$
\end{thm}

\begin{proof}  Let us suppose that $\Omega(\xi)=\mathcal Q_E(\xi)$. We may suppose $\xi$ has infinite support.
Suppose $g\in \Omega(f)$ is nonnegative and decreasing; we will show that $g\in \mathcal Q_F(f)$ and then it
follows that $\mathcal Q_F(f)=\Omega(f).$

Suppose $\epsilon>0$.  Then we may pick an integer $m\in\mathbb N$ so that
$$ g^*(m)-\lim_{n\to\infty}g^*(n)<\epsilon/4.$$
Now $\mathbb Eg\in \mathcal Q_F(f)$ since $\Omega(\xi)=\mathcal
Q_E(\xi)$.  Hence, by Proposition \ref{convexapprox} there is a
nonnegative decreasing function $h$ with $0\le h\le \mathbb Eg$
such that $\|\mathbb Eg-h\|_E<\epsilon/4$ and such that for some
$p\in\mathbb N,$
$$ \int_{pa}^b h(s)\,ds \le \int_a^b f(s)\,ds \qquad 0<pa<b<\infty.$$

Next we define
$$ \varphi(s)=\begin{cases} g(s), \qquad 0<s\le m\\ h(s), \qquad m<s<\infty.\end{cases}$$

Note that $0\le\varphi\preceq g\preceq f.$  We show that $\varphi\in \mathcal Q_F(f).$
Let us suppose $r>p$ and that $0<ra<b.$
Then if $m\le ra$ we clearly have
$$ \int_{ra}^b \varphi(s)\,ds \le \int_a^b f(s)\,ds.$$
On the other hand if $0<ra<m$, let $c=\min(b,m).$  Then
\begin{align*} \int_{ra}^b \varphi(s)\,ds &\le \int_0^b f(s)\,ds\\
&\le \int_a^b f(s)\,ds + c\xi_1/r\\
&\le \int_a^b f(s)\,ds + \frac{\xi_1}{(r-1)\xi_m}\int_{c/r}^c f(s)\,ds\\
&\le \left(1+\frac{\xi_1}{(r-1)\xi_m}\right)\int_a^b f(s)\,ds.\end{align*}

Since $r$ is arbitrary these estimate show that $\varphi\in \lambda \mathcal Q(f)$
for every $\lambda>1$ (Theorem 6.4 of \cite{KaltonSukochev2008}) and hence $\varphi\in\mathcal Q_F(f).$

Now
$$ \|g-\varphi\|_F=\|(g-\varphi)\chi_{(m,\infty)}\|_F\le \|(g-\mathbb Eg)\chi_{(m,\infty)}\|_F+\|\mathbb Eg-h\|_F.$$
However,
$$ \|(g-\mathbb Eg)\chi_{(m,\infty)}\|_F \le \sum_{j=m}^{\infty}2(g(j)-g(j+1))<\epsilon/2.$$  Hence
$$ d(g,\mathcal Q_E(f))\le \|g-\varphi\|_F<\epsilon.$$  Since $\epsilon>0$ is arbitrary we have $g\in\mathcal Q_F(f).$
This shows that $\Omega(f)=\mathcal Q_F(f).$

We next turn to the converse.  Assume $\mathcal Q_E(f)=\Omega(f)$
and that $\eta\in\Omega(\xi)$ is a decreasing sequence.  Let
$g=\sum_{n\in\mathbb N}\eta_n\chi_{(n-1,n]}$.  Then $g\in
\Omega(f)$ and so, by Proposition \ref{convexapprox}, given
$\epsilon>0$,  there exists a decreasing $0\le h\le g$ with
$\|g-h\|_F<\epsilon$ and such that for some $p\in\mathbb N$ we
have
$$ \int_{pa}^b h(s)\,ds\le \int_a^b f(s)\,ds, \qquad 0<pa<b<\infty.$$

Let $\zeta\in E$ be defined by $\zeta_n=\int_{n-1}^n h(s)\, ds.$  Then for $0\le pm\le n$ we have
$$ \sum_{k=pm+1}^n\zeta_k =\int_{pm}^nh(s)\,ds \le \int_m^n f(s)\,ds=\sum_{k=m+1}^n\xi_k.$$
Hence $\zeta \in \lambda \mathcal Q(\xi)$ for every $\lambda>1$, by Theorem 5.5 of \cite{KaltonSukochev2008},
so that $\zeta\in\mathcal Q_E(\xi).$
Furthermore,
$$ \|\eta-\zeta\|_E\leq \|g-\mathbb Eh\|_F \le \|g-h\|_F<\epsilon.$$
It now follows that $\eta\in\mathcal Q_E(\xi)$ and the proof of the Lemma is complete.
\end{proof}

Theorem \ref{Main5} now follows directly from Theorem \ref{equiv}.
\end{proof}

Let us observe that the argument of Theorem \ref{equiv}  allows us to complete the picture for positive orbits in \cite{SukochevZanin2009}:

\begin{thm} \label{SukZancomp}  Let $E$ be a fully symmetric sequence space with Fatou norm.  Then for any $\xi\in E_+$ the set
$\Omega_+(\xi)=\Omega(\xi)\cap E_+$ coincides with the closed convex hull of its extreme points if and only if
$$ \lim_{m\to\infty}m^{-1}\|\sigma_m(\xi)\|_E=0.$$
\end{thm}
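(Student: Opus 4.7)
The plan is to mirror the proof of Theorem \ref{Main5} (via Theorem \ref{equiv}), restricted to the positive cone, and invoke the positive-orbit results of \cite{SukochevZanin2009} in the final step. Without loss of generality $\xi \in E_+$ is decreasing; set $f = \sum_{j=1}^\infty \xi_j \chi_{(j-1,j]} \in F_+$, where $F$ is the function space used in the proof of Theorem \ref{Main5}. Let $\mathcal Q_{E,+}(\xi)$ (resp.\ $\mathcal Q_{F,+}(f)$) denote the closure of the convex hull of $\{\eta \in E_+: \eta \preceq \xi\}$ (resp.\ $\{g \in F_+: g \preceq f\}$); these coincide with the closed convex hulls of $\partial_e\Omega_+(\xi)$ and $\partial_e\Omega_+(f)$ respectively.

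The first step is to prove the positive analog of Theorem \ref{equiv}: $\mathcal Q_{E,+}(\xi) = \Omega_+(\xi)$ if and only if $\mathcal Q_{F,+}(f) = \Omega_+(f)$. Inspection of the proof of Theorem \ref{equiv} shows that every auxiliary object constructed there---the function $\varphi$ equal to $g$ on $(0,m]$ and to the Proposition \ref{convexapprox}-approximant $h$ on $(m,\infty)$ in the forward direction, and the sequence $\zeta_n = \int_{n-1}^n h$ in the converse---is nonnegative whenever the input is, so the argument restricts to the positive cones verbatim. Next, Theorems 24 and 25 of \cite{SukochevZanin2009} applied to $f \in F_+$ yield $\mathcal Q_{F,+}(f) = \Omega_+(f)$ iff the appropriate Braverman-Mekler condition holds on $F$: if $F \setminus L_1 \ne \emptyset$ (equivalently $E \setminus \ell_1 \ne \emptyset$) this is $\lim_{\tau \to \infty} \tau^{-1}\|\sigma_\tau f^*\|_F = 0$, and if $F \subset L_1$ one uses instead the $\chi_{(0,1)}$-variant.

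Finally, one translates the function-space condition back to $E$. The step-function structure of $f$ gives $a_n(\sigma_m f^*) = (\sigma_m \xi)_n$ and $(\sigma_m f^*)^*(0) = \xi_1$, so
$$
\|\sigma_m f^*\|_F = \xi_1 + \|\sigma_m \xi\|_E,
$$
whence $\lim_{m\to\infty} m^{-1}\|\sigma_m f^*\|_F = \lim_{m\to\infty} m^{-1}\|\sigma_m \xi\|_E$; the continuous-parameter limit coincides with the integer one because $\sigma_\tau f^*$ is pointwise monotone in $\tau$. The main obstacle I anticipate is Step 1, namely carefully verifying that the constructions from the proof of Theorem \ref{equiv} remain in the positive cone at each stage. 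This should be essentially automatic, since Proposition \ref{convexapprox} produces $h$ with $0 \le h \le g$ and all remaining manipulations only combine and truncate nonnegative quantities.
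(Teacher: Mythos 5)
Your proposal matches the paper's proof, which consists precisely of the remark that the argument of Theorem \ref{equiv} transfers to the positive cones (yielding $\Omega_+(\xi)=\mathcal Q_E(\xi)\cap E_+$ if and only if $\Omega_+(f)=\mathcal Q_F(f)\cap F_+$), after which one invokes the positive-orbit results of \cite{SukochevZanin2009} for $F$ and translates the dilation condition back to $E$ exactly as you do. One minor slip: $\mathcal Q_{E,+}(\xi)$ should be the closed convex hull of $\{\eta\in E_+:\eta^*\le\xi^*\}$ (equivalently $\mathcal Q_E(\xi)\cap E_+$), not of $\{\eta\in E_+:\eta\preceq\xi\}$, since the latter set is already all of $\Omega_+(\xi)$ and would make the statement vacuous.
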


In fact we can prove by the same argument as in Theorem \ref{equiv} that $\Omega_+(\xi)=\mathcal Q_E(\xi)\cap E_+$ if and only if $\Omega_+(f)=\mathcal Q_F(f)\cap F_+.$

We remark that in \cite{SukochevZanin2009} some examples of Marcinkiewicz spaces and Orlicz spaces are discussed in the context of Theorems \ref{Main3}, \ref{Main4}, \ref{Main30} and \ref{Main5}.  We refer the reader to \cite{SukochevZanin2009} for details.  We take the opportunity to improve Proposition 33 of \cite{SukochevZanin2009}:

\begin{prop}\label{improve} Let $M$ be an Orlicz function.  Then for any $f\in L_M(0,\infty)$ we have $\Omega(f)=\mathcal Q_{L_M}(f).$  Similarly for for any $\xi\in \ell_M$ we have $\Omega(\xi)=\mathcal Q_{\ell_M}(\xi).$
\end{prop}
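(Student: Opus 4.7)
The plan is to verify, for an arbitrary Orlicz function $M$, the dilation hypothesis required by one of Theorems~\ref{Main3}, \ref{Main4}, and \ref{Main5}. Since the Luxemburg norm makes $L_M$ and $\ell_M$ fully symmetric Banach spaces with a Fatou norm, the standing hypotheses of those theorems hold without comment, and the entire argument reduces to a uniform estimate for the quantity $\tau\int M(f^*/(\varepsilon\tau))\,du$ (possibly restricted to $(0,1/\tau)$) as $\tau\to\infty$, together with the analogous estimate for sums.

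The engine is the observation that, because $M$ is convex with $M(0)=0$, the ratio $M(t)/t$ is nondecreasing in $t$, and consequently the map $\lambda\mapsto \lambda M(x/\lambda)$ is nonincreasing in $\lambda$ for each fixed $x\ge 0$. Writing $\varepsilon\tau\,M(f^*(u)/(\varepsilon\tau))=f^*(u)\,(M(t)/t)|_{t=f^*(u)/(\varepsilon\tau)}$, one obtains the pointwise limit $\varepsilon\tau\,M(f^*(u)/(\varepsilon\tau))\to f^*(u)\,M'_+(0)$ as $\tau\to\infty$, together with the integrable majorant $\varepsilon\tau\,M(f^*/(\varepsilon\tau))\le \lambda_0 M(f^*/\lambda_0)$ whenever $\varepsilon\tau\ge \lambda_0$ with $\lambda_0>\|f^*\|_{L_M}$.

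I then split on whether $L_M\subset L_1$. If not, the bound $M(t)\ge M'_+(0)t$ (valid since $M$ is convex with $M(0)=0$) forces $M'_+(0)=0$; hence the pointwise limit above is zero and dominated convergence yields $\tau\int_0^\infty M(f^*/(\varepsilon\tau))\,du\to 0$, that is, $\tau^{-1}\|\sigma_\tau f^*\|_{L_M}\to 0$, and Theorem~\ref{Main3} applies. If instead $L_M\subset L_1$, then Theorem~\ref{Main4} calls for $\tau^{-1}\|\sigma_\tau f^*\chi_{(0,1)}\|_{L_M}\to 0$, and here the majorant alone suffices:
\[
\tau\int_0^{1/\tau} M\bigl(f^*/(\varepsilon\tau)\bigr)\,du \;\le\; \frac{\lambda_0}{\varepsilon}\int_0^{1/\tau} M(f^*/\lambda_0)\,du \;\to\; 0,
\]
because $M(f^*/\lambda_0)\in L_1(0,\infty)$ is being integrated over the shrinking set $(0,1/\tau)$; no hypothesis on $M'_+(0)$ is required.

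The sequence case follows the same scheme. If $M'_+(0)>0$, then $M(t)\ge M'_+(0)t$ gives $\ell_M\subset \ell_1$, while the opposite inclusion $\|\xi\|_{\ell_M}\le \max(1,M(1))\,\|\xi\|_{\ell_1}$ follows from $M(t)/t$ being nondecreasing together with $\xi_k\le \|\xi\|_{\ell_1}$; hence $\ell_M=\ell_1$ with equivalent norms, $\ell_M$ is order-continuous, and $\Omega(\xi)=\mathcal{Q}_{\ell_M}(\xi)$ already follows from the Krein--Milman argument described in the introduction. Otherwise $M'_+(0)=0$, so $\ell_M\setminus \ell_1\ne\emptyset$, and the same dominated-convergence argument with sums in place of integrals gives $m^{-1}\|\sigma_m\xi^*\|_{\ell_M}\to 0$, so that Theorem~\ref{Main5} concludes the proof. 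The only real obstacle is bookkeeping this case analysis, but it is dissolved by a single convexity estimate that simultaneously supplies the dominating majorant and forces $M'_+(0)=0$ exactly in the regime where Theorems~\ref{Main3} and \ref{Main5} are invoked.
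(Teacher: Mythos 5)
Your treatment of the function-space case is correct and is essentially the paper's own argument: the same change of variables $s\mapsto s/\tau$, the same use of the monotonicity of $\lambda\mapsto\lambda M(x/\lambda)$ to get a dominating function, and the same dichotomy ($M'_+(0)=0$ versus $M(t)\ge ct$, which is exactly $L_M\not\subset L_1$ versus $L_M\subset L_1$), feeding into Theorems \ref{Main3} and \ref{Main4} respectively. You are also right to observe that the sequence case, which the paper does not write out, needs a separate branch when $M'_+(0)>0$, since then $\ell_M=\ell_1$ and Theorem \ref{Main5} (which requires $E\setminus\ell_1\neq\emptyset$) is unavailable.

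Your disposal of that branch, however, has a genuine gap. The Krein--Milman argument quoted from the introduction rests on weak compactness of the orbit in order-continuous symmetric \emph{function} spaces; it does not transfer to $\ell_1$. Indeed $\ell_1$ has the Schur property, so its weakly compact subsets are norm compact, whereas $\Omega(\xi)$ is typically not norm compact: for $\xi=(1,0,0,\ldots)$ the condition $\eta\preceq\xi$ reads $\sum_{k\le n}\eta_k^*\le 1$ for all $n$, so $\Omega(\xi)$ is the entire unit ball of $\ell_1$. Thus the orbit is not weakly compact and the cited mechanism fails. The conclusion you want is nevertheless true and easily repaired: for $\eta\preceq\xi$ in $\ell_1$ the truncations $\eta\chi_{\{1,\ldots,N\}}$ converge to $\eta$ in norm and still satisfy $\eta\chi_{\{1,\ldots,N\}}\preceq\xi$, and a finitely supported sequence submajorized by $\xi$ lies in $\mathcal Q(\xi)$ by the finite-dimensional Hardy--Littlewood--P\'olya theorem. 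With that substitution (or any other correct treatment of the order-continuous case) your proof is complete; the remaining steps, including the verification that $\ell_1\subset\ell_M$ when $M'_+(0)>0$ and that $\ell_M\setminus\ell_1\neq\emptyset$ when $M'_+(0)=0$, are sound.
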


\begin{proof}  We give the proof only for $L_M(0,\infty).$ Suppose first that $M(t)=o(t)$ when $t\to 0.$ We show that $\|\lim_{\tau\to\infty}\tau^{-1}\sigma_{\tau}f\|_{L_M}=0$ whenever $f\in (L_M)_+.$  Suppose $\alpha>0.$
$$ \int_0^{\infty}M\left(\frac{\alpha f(s/\tau)}\tau\right)ds=\int_0^{\infty}\tau M\left(\frac{\alpha f(s)}{\tau}\right)\,ds$$ for any $\tau>1.$ Since $f\in L_M$ there exists $\tau_0$ so that
$$\int_0^{\infty}\tau_0 M\left(\frac{\alpha f(s)}{\tau_0}\right)\,ds<\infty.$$  Now letting $\tau\to\infty$ we obtain from the Dominated Convergence Theorem that
$$\lim_{\tau\to\infty}\int_0^{\infty}\tau M\left(\frac{\alpha f(s)}{\tau}\right)\,ds=0$$
so that $\lim_{\tau\to\infty}\tau^{-1}\|\sigma_{\tau}f\|_{L_M}=0$ and we can apply Theorem \ref{Main3}.

Now if $M(t)\ge ct$ for all $t>0$ where $c>0$ we have $L_M\subset L_1.$  For any $\alpha>0$ and $\tau>1$
$$ \int_0^1 M\left(\frac{\alpha f^*(s/\tau)}\tau\right)ds=\int_0^{1}\tau\chi_{(0,\tau^{-1})}(s)M\left(\frac{\alpha f^*(s)}{\tau}\right)\,ds.$$  As before the right-hand side is integrable for some $\tau=\tau_0$ and we can apply the Dominated Convergence Theorem to deduce that $\tau^{-1}\|(\sigma_{\tau}f^*)\chi_{(0,1)}\|_{L_M}$ tends to $0$ as $\tau$ approaches infinity. Now one can apply Theorem \ref{Main4}.
\end{proof}

\section{A noncommutative analog}

Let $\mathcal H$ be a separable complex Hilbert space.  We denote by $\mathcal B(\mathcal H)$ the space of bounded operators on $\mathcal H$ and by $\mathcal K(\mathcal H)$ the ideal of compact operators on $\mathcal H.$  For any $T\in\mathcal B(\mathcal H)$ we define the singular values
$$s_n(T)=\inf \{\|T(I-P)\|,$$
where the infimum is taken over all orthogonal projections $P$ such that ${\rm rank}(P)<n\}.$

If $E$ is a symmetric sequence space then we can define a Banach ideal of compact operators on $\mathcal H$ by $T\in \mathcal S_E$ if and only if $(s_k(T))_{k=1}^{\infty}\in E$ and then the norm is given by $\|T\|_{E}=\|(s_k(T))_{k=1}^{\infty}\|_E.$  For fully symmetric spaces this is well-known (e.g. see \cite{GohbergKrein1969} but for symmetric spaces it follows from \cite{KaltonSukochev2008}).

Let $\mathcal H$ be a separable Hilbert space and suppose $T\in\mathcal K(\mathcal H).$  Let $\mathcal Q(T)$ be the convex hull  of the set $\{ATB;\ \|A\|,\|B\|\le 1\}.$  We define its orbit $\Omega(T)$ to be the closure of $\mathcal Q(T)$ in $\mathcal K(\mathcal H).$   It is easy to check from the definition that
$R\in \Omega(T)$ if and only if
$$ \sum_{k=1}^ns_k(R)\le \sum_{k=1}^ns_k(T), \qquad n=1,2,\ldots$$
For any symmetric Banach sequence space $E$ we may define $\mathcal Q_E(T)$ to be the closure of $\mathcal Q(T)$ in $\mathcal S_E.$

\begin{thm}\label{noncom} Let $\mathcal E$ be a fully symmetric sequence space with a Fatou norm.  Suppose $\mathcal S_E$ is the corresponding ideal of compact operators.  Then for $T\in\mathcal S_E$ we have
$\Omega(T)=\mathcal Q_E(T)$ if and only if $$\lim_{m\to\infty}m^{-1}\|\sigma_m(s_k(T))_{k=1}^{\infty}\|_E=0.$$
\end{thm}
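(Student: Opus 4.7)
The strategy is to deduce Theorem \ref{noncom} from the sequence-space result Theorem \ref{Main5} via the equivalence
\[
\Omega(T)=\mathcal{Q}_E(T)\iff \Omega(\xi)=\mathcal{Q}_E(\xi), \qquad \xi:=(s_k(T))_{k=1}^\infty.
\]
Granted this equivalence, the dilation criterion $\lim_{m\to\infty} m^{-1}\|\sigma_m\xi\|_E=0$ is exactly what Theorem \ref{Main5} supplies. Since $\mathcal Q(T)$ is invariant under unitary conjugation of $T$, I may assume throughout that $T=D_\xi$ is diagonal in some fixed orthonormal basis $(e_n)$.

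For the easy direction $\Omega(\xi)=\mathcal{Q}_E(\xi)\Rightarrow\Omega(T)=\mathcal{Q}_E(T)$, I use a direct SVD-lifting. Let $R\in\Omega(T)$ and write $R=\sum_n s_n(R)|\phi_n\rangle\langle\psi_n|$. Since $s(R)\preceq\xi$, the hypothesis gives $s(R)\in\mathcal{Q}_E(\xi)$: for $\epsilon>0$ pick sequences $\eta^{(1)},\dots,\eta^{(p)}$ with $(\eta^{(j)})^*\le\xi$ and $\|s(R)-\tfrac1p\sum_j\eta^{(j)}\|_E<\epsilon$. Setting $R_j:=\sum_n\eta^{(j)}_n|\phi_n\rangle\langle\psi_n|$, one has $s(R_j)=(\eta^{(j)})^*\le\xi$, which by a polar-decomposition factorisation yields $R_j=A_jTB_j$ with $\|A_j\|,\|B_j\|\le 1$. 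Crucially, all $R_j$ and $R$ share the rank-one blocks $|\phi_n\rangle\langle\psi_n|$, so the operator $R-\tfrac1p\sum R_j$ decomposes in the same blocks and
\[
\Bigl\|R-\tfrac1p\sum_jR_j\Bigr\|_{\mathcal{S}_E}=\Bigl\|s(R)-\tfrac1p\sum_j\eta^{(j)}\Bigr\|_E<\epsilon,
\]
placing $R\in\mathcal{Q}_E(T)$.

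For the substantive converse, let $\mathbb{E}$ be the conditional expectation onto the diagonal algebra in the basis $(e_n)$, realised by averaging over diagonal unitaries $U_\theta=\operatorname{diag}(e^{i\theta_n})$; this map is a contraction on $\mathcal{S}_E$. Given $\eta\in\Omega(\xi)$ nonnegative decreasing, the operator $D_\eta$ is in $\Omega(T)=\mathcal{Q}_E(T)$, so it is the $\mathcal{S}_E$-limit of convex combinations $\sum_i\lambda_{i,k}A_{i,k}TB_{i,k}$. Since $\mathbb{E}D_\eta=D_\eta$, applying $\mathbb{E}$ gives
\[
D_\eta=\lim_k\sum_i\lambda_{i,k}\,D_{d_{i,k}},\qquad d_{i,k,n}=\sum_m\xi_m(A_{i,k})_{nm}(B_{i,k})_{mn},
\]
i.e.\ $\eta=\lim_k\sum_i\lambda_{i,k}d_{i,k}$ in $E$. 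Writing $C^{(i,k)}_{nm}:=(A_{i,k})_{nm}(B_{i,k})_{mn}$, the Cauchy--Schwarz inequality together with $\|A_{i,k}\|,\|B_{i,k}\|\le 1$ shows that the modulus matrix $|C^{(i,k)}|$ has all row and column sums bounded by $1$; i.e.\ it is doubly substochastic.

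The main obstacle is then to show that each sequence $d_{i,k}$ belongs to $\mathcal{Q}(\xi)$, since the a priori bound $d_{i,k}^*\preceq s(A_{i,k}TB_{i,k})\le\xi$ only yields $d_{i,k}\in\Omega(\xi)$, which would be circular. I resolve this by a Birkhoff--von Neumann decomposition: write $|C^{(i,k)}|=\sum_l\mu_{l}P_l$ as a convex combination of partial permutation matrices, transfer the phases of $C^{(i,k)}$ to obtain $C^{(i,k)}=\sum_l\mu_l C^{(i,k,l)}$ where $|C^{(i,k,l)}|=P_l$ with unit-modulus nonzero entries, and observe that the corresponding $d^{(i,k,l)}_n=\sum_m\xi_mC^{(i,k,l)}_{nm}$ is a phased partial rearrangement of $\xi$, so $|d^{(i,k,l)}|^*\le\xi$ and $d^{(i,k,l)}\in\{g:g^*\le\xi\}$. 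A finite-rank truncation of $A_{i,k},B_{i,k}$, which does not disturb the $\mathcal{S}_E$-limit by the ideal inequality $\|XTY\|_{\mathcal{S}_E}\le\|X\|\,\|T\|_{\mathcal{S}_E}\|Y\|$, reduces the decomposition step to its standard finite-dimensional form. It follows that $d_{i,k}\in\mathcal{Q}(\xi)$, hence $\sum_i\lambda_{i,k}d_{i,k}\in\mathcal{Q}(\xi)$, and taking limits in $E$ gives $\eta\in\overline{\mathcal{Q}(\xi)}_E=\mathcal{Q}_E(\xi)$, completing the reduction to Theorem \ref{Main5}.
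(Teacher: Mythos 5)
Your overall architecture is the same as the paper's: reduce everything to the scalar statement $\Omega(\xi)=\mathcal Q_E(\xi)$ for $\xi=(s_k(T))$ and invoke Theorem \ref{Main5}. Your ``easy direction'' (lifting an $E$-approximation of $s(R)$ along the singular value decomposition of $R$) is correct and is essentially what the paper leaves as ``immediate.''

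The hard direction, however, contains a genuine gap at the Birkhoff--von Neumann step. You correctly reduce to showing that the diagonal $d_n=\sum_m\xi_m A_{nm}B_{mn}$ of $AD_\xi B$ lies in $\mathcal Q_E(\xi)$ and not merely in $\Omega(\xi)$ --- this is exactly the crux, since $d^*\preceq\xi$ is automatic and circular. But your route to it fails in infinite dimensions. First, an infinite doubly substochastic matrix is in general \emph{not} a convex combination (even a countable convex series) of partial permutation matrices; the infinite Birkhoff theorem only gives such decompositions in weak/pointwise topologies, which do not transfer to $E$-norm statements. Second, the finite-rank truncation you use to rescue this does not ``preserve the $\mathcal S_E$-limit'': the error $ATB-P_NATBP_N$ is dominated by terms like $\|(I-P_N)ATB\|_{\mathcal S_E}$, and for non-separable $E$ (Marcinkiewicz spaces, say --- precisely the case where the theorem has content, since for separable $E$ the orbit is weakly compact and the result follows from Krein--Milman) finite-rank operators are not dense in $\mathcal S_E$ and $\|(I-P_N)X\|_{\mathcal S_E}\not\to0$. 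The ideal inequality $\|XTY\|_{\mathcal S_E}\le\|X\|\,\|T\|_{\mathcal S_E}\|Y\|$ does not help because $\|A-P_NAP_N\|_{\mathcal B(\mathcal H)}$ need not tend to $0$. So you obtain $d^{(N)}\in\mathcal Q(\xi)$ for each truncation but cannot pass to the limit inside the closed set $\mathcal Q_E(\xi)$.

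What is needed here is a quantitative, dimension-free statement, and this is exactly what the paper imports from \cite{KaltonSukochev2008}: Proposition 8.6 there shows that for $R\in\mathcal Q(T)$ the sequence $s(R)$ satisfies a uniform submajorization estimate of the form $\sum_{k=pm+1}^{n}s_k(R)\le\sum_{k=m+1}^{n}\xi_k$, and Theorem 5.5 there converts this into $s(R)\in\lambda\mathcal Q(\xi)$ for every $\lambda>1$, hence $s(R)\in\mathcal Q_E(\xi)$. The paper then avoids your conditional-expectation step entirely: given $S\in\Omega(T)=\mathcal Q_E(T)$ it picks $R\in\mathcal Q(T)$ with $\|R-S\|_{\mathcal S_E}<\epsilon$ and uses the submajorization inequality $s(R)-s(S)\preceq s(R-S)$ of \cite{DoddsDoddsdePagter1989a} to conclude $\|s(R)-s(S)\|_E<\epsilon$, so $s(S)\in\mathcal Q_E(\xi)$. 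To repair your argument you would either have to prove the analogue of that uniform estimate for the diagonal $d$ directly (which your Cauchy--Schwarz computation does not yet give), or simply cite the two results from \cite{KaltonSukochev2008} as the paper does.
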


\begin{proof} Let $\xi=(s_k(T))_{k=1}^{\infty}.$ Let $R\in\mathcal K(\mathcal H)$ and let $\eta=(s_k(R))_{k=1}^{\infty}$.
If $R\in \mathcal Q(T)$ then it follows from Proposition 8.6 and Theorem 5.5 of \cite{KaltonSukochev2008} that $\eta\in\lambda\mathcal Q(\xi)$ for every $\lambda>1.$

First suppose that $\Omega(T)=\mathcal Q_E(T).$  If $S\in \Omega(T)$ then given $\epsilon>0$ there exists $R\in\mathcal Q(T)$ with
$\|R-S\|_E<\epsilon.$  Let $\zeta=(s_k(S))_{k=1}^{\infty}.$  Then by the submajorization inequality of \cite{DoddsDoddsdePagter1989a},
$$ \eta-\zeta\preceq (s_k(R-S))_{k=1}^{\infty}$$ so that
$ \|\eta-\zeta\|_E <\epsilon.$  Since $\eta\in \mathcal Q_E(\xi)$ and $\epsilon>0$ is arbitrary, this implies that $\zeta\in\mathcal Q_E(\xi)$ and so $\mathcal Q_E(\xi)=\Omega(\xi).$  Theorem \ref{Main5} can then be applied.

The converse direction is immediate.
\end{proof}

\begin{bibsection}
\begin{biblist}

\bib{BennettSharpley1982}{book}{
  author={Bennett, C.},
  author={Sharpley, R.},
  title={Interpolation of operators},
  series={Pure and Applied Mathematics},
  volume={129},
  publisher={Academic Press Inc.},
  place={Boston, MA},
  date={1988},
}

\bib{BravermanMekler1977}{article}{
  author={Braverman, M. {\v {S}}.},
  author={Mekler, A. A.},
  title={The Hardy-Littlewood property for symmetric spaces},
  language={Russian},
  journal={Sibirsk. Mat. \v Z.},
  volume={18},
  date={1977},
  pages={522--540, 717},
}

\bib{Calderon1966}{article}{
  author={Calder{\'o}n, A.-P.},
  title={Spaces between $L\sp {1}$ and $L\sp {\infty }$ and the theorem of Marcinkiewicz},
  journal={Studia Math.},
  volume={26},
  date={1966},
  pages={273--299},
}

\bib{ChilinKryginSukochev1992}{article}{
  author={Chilin, V. I.},
  author={Krygin, A. V.},
  author={Sukochev, F. A.},
  title={Extreme points of convex fully symmetric sets of measurable operators},
  journal={Integral Equations Operator Theory},
  volume={15},
  date={1992},
  pages={186--226},
}

\bib{DoddsDoddsdePagter1989a}{article}{
  author={Dodds, P. G.},
  author={Dodds, T. K.-Y.},
  author={de Pagter, B.},
  title={A general Markus inequality},
  note={ {Miniconference on Operators in Analysis}, {Sydney}, {1989}, { Proc. Centre Math. Anal. Austral. Nat. Univ.}, {24}, {Austral. Nat. Univ.}, {Canberra} },
  date={1990},
  pages={47--57},
}

\bib{DoddsSukochevSchluchtermann2001}{article}{
  author={Dodds, P. G.},
  author={Sukochev, F. A.},
  author={Schl{\"u}chtermann, G.},
  title={Weak compactness criteria in symmetric spaces of measurable operators},
  journal={Math. Proc. Cambridge Philos. Soc.},
  volume={131},
  date={2001},
  pages={363--384},
}

\bib{GohbergKrein1969}{book}{
  author={Gohberg, I. C.},
  author={Kre{\u \i }n, M. G.},
  title={Introduction to the theory of linear nonselfadjoint operators},
  series={Translated from the Russian by A. Feinstein. Translations of Mathematical Monographs, Vol. 18},
  publisher={American Mathematical Society},
  place={Providence, R.I.},
  date={1969},
  pages={xv+378},
}

\bib{KaltonSukochev2008}{article}{
  author={Kalton, N. J.},
  author={Sukochev, F. A.},
  title={Symmetric norms and spaces of operators},
  journal={J. Reine Angew. Math.},
  volume={621},
  date={2008},
  pages={81--121},
}

\bib{KreinPetuninSemenov1982}{book}{
  author={Kre{\u \i }n, S. G.},
  author={Petun{\={\i }}n, Y. {\=I}.},
  author={Sem{\"e}nov, E. M.},
  title={Interpolation of linear operators},
  series={Translations of Mathematical Monographs},
  volume={54},
  publisher={American Mathematical Society},
  place={Providence, R.I.},
  date={1982},
}

\bib{LindenstraussTzafriri1979}{book}{
  author={Lindenstrauss, J.},
  author={Tzafriri, L.},
  title={Classical Banach spaces, II, Function spaces},
  series={Ergebnisse der Mathematik und ihrer Grenzgebiete [Results in
            Mathematics and Related Areas]},
  volume={97},
  publisher={Springer-Verlag},
  place={Berlin},
  date={1979},
}

\bib{Mitjagin1965}{article}{
  author={Mitjagin, B. S.},
  title={An interpolation theorem for modular spaces},
  language={Russian},
  journal={Mat. Sb. (N.S.)},
  volume={66 (108)},
  date={1965},
  pages={473--482},
}

\bib{Ryff1965}{article}{
  author={Ryff, J. V.},
  title={Orbits of $L\sp {1}$-functions under doubly stochastic transformations},
  journal={Trans. Amer. Math. Soc.},
  volume={117},
  date={1965},
  pages={92--100},
}

\bib{Ryff1967}{article}{
  author={Ryff, J. V.},
  title={Extreme points of some convex subsets of $L\sp {1}(0,\,1)$},
  journal={Proc. Amer. Math. Soc.},
  volume={18},
  date={1967},
  pages={1026--1034},
}

\bib{SukochevZanin2009}{article}{
  author={Sukochev, F. A.},
  author={Zanin, D.},
  title={Orbits in symmetric spaces},
  journal={J. Functional Analysis},
  volume={257},
  date={2009},
  pages={194\ndash 218},
}

\end{biblist}
\end{bibsection}

\end{document}